\newtheorem{remark}{Remark}
\newtheorem{proposition}{Proposition}
\newtheorem{lemma}{Lemma}
\newcommand{\figref}[1]{Fig. \ref{#1}}
\newcommand{\cK}{\mathcal{K}}
\newcommand{\cC}{\mathcal{C}}
\newcommand{\cKH}{\cK_\mathcal{H}}
\newcommand{\cKR}{\cK_\mathcal{R}}
\newcommand{\cKRI}{\cKR(R_{inner})}
\newcommand{\cKHO}{\cKH(H_{outer})}
\newcommand{\cA}{\mathcal{A}}
\newcommand{\minuszero}{\setminus \{0\}}
\newcommand{\dual}[1]{#1^*}
\newcommand{\J}[1]{\partial f(#1)}
\newcommand{\innerproduct}[2]{#1^T #2}
\newcommand{\outerproduct}[2]{(#1 #2^T)}
\newcommand{\norm}[2]{||#1||_{#2}}
\newcommand{\posvector}{\ensuremath{p}}
\newcommand{\posmatrix}{\ensuremath{P}}
\newcommand{\primalvector}{\ensuremath{r}}
\newcommand{\dualvector}{\ensuremath{h}}
\newcommand{\domright}{\ensuremath{\bar{r}}}
\newcommand{\domleft}{\ensuremath{\bar{h}}}
\newcommand{\widening}{\ensuremath{w}}
\newcommand{\widenedelement}{\ensuremath{{\bar{A}}}}
\newcommand{\widenedsetgeneral}[1]{\ensuremath{\bar{\cA}_{#1}}}
\newcommand{\widenedset}{\ensuremath{\widenedsetgeneral{\widening}}}
\newcommand{\numrays}{m}
\newcommand{\parameters}{c}
\newcommand{\designmatrix}{U}
\newcommand{\numnonfixed}{l}
\newcommand{\electricalvaluescommon}{\ensuremath{L_1 = 30, C_2 = 10, R_2 = 5, C_3 = 1}}
\newcommand{\electricalnonlineardefault}{\ensuremath{f(x_3) = x_3 - 1.5\tanh(x_3)}}
\newcommand{\electricalnonlinearbounds}{\ensuremath{f'_l = -0.5 \text{ and } f'_u = 1}}
\newcommand{\electricalresistanceosc}{1}
\newcommand{\electricalresistancebi}{50}
\newcommand{\electricaltolerance}{\ensuremath{10\%}}
\newcommand{\electricalnewtolerance}{\ensuremath{20\%}}
\newcommand{\electricalnewresistancebi}{61}
\newcommand{\consensusanalnumraysold}{52}
\newcommand{\consensusanalnumraysnew}{7}
\newcommand{\consensussynampl}{2}
\newcommand{\consensussyngain}{4.8}
\newcommand{\springdamperanalmin}{1}
\newcommand{\springdamperanalmax}{3}
\newcommand{\springdampersynmin}{-3}
\newcommand{\springdampersynmax}{3}
\newcommand{\springdampersyninitialkp}{3.02}
\DeclareMathOperator{\interior}{int}
\DeclareMathOperator{\conichull}{conic-hull}
\DeclareMathOperator{\convexhull}{convex-hull}
\DeclareMathOperator{\offdiag}{off-diag}
\newenvironment{proof}{
   \begin{pf}
}{
   \hfill $\blacksquare$
   \end{pf}
}
\begin{document}
\begin{frontmatter}

\title{
An Optimization Approach to Verifying and Synthesizing $\cK$-cooperative Systems%
\thanksref{footnoteinfo}} 

\thanks[footnoteinfo]{D. Kousoulidis is supported by the Engineering and Physical Sciences Research Council (EPSRC) of the United Kingdom.}

\author[First]{Dimitris Kousoulidis} 
\author[First]{Fulvio Forni}

\address[First]{Department of Engineering, University of Cambridge, UK \\
   (dk483@eng.cam.ac.uk,
   f.forni@eng.cam.ac.uk)}

\begin{abstract}                
Differential positivity and $\cK$-cooperativity, a special case of differential positivity, extend 
differential approaches to control to nonlinear systems with multiple equilibria,
such as switches or multi-agent consensus.
To apply this theory,
we reframe conditions for strict $\cK$-cooperativity as an optimization problem.
Geometrically, the conditions correspond to finding a cone that a set of linear operators leave invariant.
Even though solving the optimization problem is hard,
we combine the optimization perspective with the geometric intuition
to construct a heuristic
cone-finding algorithm centered around Linear Programming (LP).
The algorithm we obtain is unique in that it
modifies existing rays of a candidate cone instead of adding new ones.
This enables us to
also take a first step in tackling the synthesis problem
for $\cK$-cooperative systems.
We demonstrate our approach on some examples,
including one in which we repurpose our algorithm to obtain
a novel alternative tool for computing polyhedral Lyapunov functions
of bounded complexity.
\end{abstract}

\begin{keyword}
   Nonlinear control, Differential positivity,
   Linear programming,
   Monotone systems,
   Bistability,
   Consensus

\end{keyword}

\end{frontmatter}


\section{Introduction}
Multistable systems are difficult to analyze with classical system-theoretic methods.
The presence of several fixed points limits the use of fundamental methods like Lyapunov theory,
constraining its use within the neighborhood of each
attractor.
Feedback control design is even more challenging.
Even the task of tuning the parameters of a simple bistable switch
for performance or robustness to perturbations
pushes any classical tool of nonlinear control to its limits.
In this paper we begin to address these
issues by proposing a robust, tractable approach to nonlinear analysis and feedback design for 
systems whose attractor landscape is characterized by the presence of multiple fixed points. 
The approach builds on differential positivity (\cite{forni_differentially_2016}),
which studies systems by looking at the positivity of the linearized dynamics (along any trajectory of the system).
Tractability follows from a two-step optimization iteration based on Linear Programming,
which provides a novel numerical tool for analysis and feedback design for monostable and multistable closed-loop systems. 

Differential positivity extends differential analysis (\cite{forni_differential_2014-1}).
The approach is similar to contraction theory
(\cite{lohmiller_contraction_1998-1, pavlov_uniform_2006, russo_global_2010, forni_differential_2014}), 
which characterizes the stability/contraction of a system from the stability of its linearizations
along any possible system trajectory.
This makes contraction a powerful approach for problems where the fixed point varies with parameters.
Similarly to how contraction theory links the convergence of system trajectories to the stability of the linearizations,
differential positivity links the behavior of the nonlinear system (monotonicity, multistability, etc.) to the positivity of its linearizations.

A linear system is positive if its trajectories contract a cone.
This is a form of projective contraction that leads to the Perron-Frobenius theorem,
which entails the existence of a slow dominant mode in the system dynamics
and is also related to the presence of a dominant eigenvector within the cone (\cite{bushell_hilberts_1973}).
In a similar way, a system is differentially positive if its linearized dynamics contract a cone field
(a cone that depends on the state of the system).
When the cone field is constant and the system state belongs to a vector space,
a differentially positive system is a monotone system (\cite{hirsch_chapter_2006,angeli_monotone_2003}),
therefore its trajectories preserve a partial order relation on the
system state space.
Furthermore, almost all bounded trajectories of the system converge to some fixed point, asymptotically. 
We call these systems \emph{$\cK$-cooperative}, to emphasize the role of the constant cone field $\cK$ and important
connections with the literature (\cite{hirsch_chapter_2006}).

Finding the contracting cone of a $\cK$-cooperative system is provably difficult (\cite{protasov_when_2010}).
For this reason, most of the literature assumes that a suitable cone is readily available (\cite{hirsch_chapter_2006,angeli_monotone_2003})). 
In contrast, in this paper we provide a heuristic algorithm to attempt to find such cones.
The theory builds on a previous attempt (\cite{kousoulidis_finding_2019}).
The key difference from our previous algorithm
is that each iteration of the new algorithm modifies existing rays instead of adding new ones.
This has computational advantages and allows us to find
cones of lower complexity (fewer extreme rays).
We also make use of this to attempt to synthesize $\cK$-cooperativity,
enabling a new approach
to robust design for multistable systems.

Our algorithm is
based on optimization and Linear Programming.
To make use of optimization techniques, we introduce
a quantification of how far a given system is from
being $\cK$-cooperative with respect to a given cone.
This is then used
to iteratively reshape the cone to improve this distance
until we obtain a contracting cone.
We illustrate our algorithm in practice in Section \ref{sec:examples} by analyzing and designing
a robust electrical switch.
We also investigate a nonlinear consensus problem with repulsive interactions.
Finally, through a suitable system augmentation,
in our third example we show how to use our algorithm 
to build polyhedral Lyapunov functions for stability analysis.

The first two sections below introduce the language of the paper. 
The main cone finding algorithm is presented in 
Sections \ref{sec:sys_anal} and \ref{sec:control_design} for analysis and synthesis respectively.
This is followed by the examples.

{
   \small
   \vspace{4mm}
   \textbf{Notation:}
   When used on matrices or vectors, inequalities are always meant in the element-wise sense.
   Combined with inequalities and used on a matrix, $\offdiag(\cdot)$ implies that the inequalities apply only to the off-diagonal entries of the matrix.
   The interior of a set $\mathcal{S}$ is denoted by $\interior(\mathcal{S})$.
   A proper cone is a set $\cK$ such that:
   (i) if $\primalvector_1,\, \primalvector_2 \in \cK \text{ and }0 \leq \posvector_1,\,\posvector_2 \in \mathbb{R}$, then $\posvector_1 \primalvector_1 + \posvector_2 \primalvector_2 \in \cK$;
   (ii) $\interior(\cK) \neq \{\}$;
   and (iii) if $\primalvector \in \cK \minuszero$, then $-\primalvector \notin \cK$.
   The dual cone of a cone $\cK$ is denoted by $\dual{\cK}$ and defined as $\{\dualvector: \innerproduct{\dualvector}{\primalvector} \geq 0, \, \forall \primalvector \in \cK\}$.
   The dual cone can also be interpreted geometrically as
   the set of all half-spaces that contain the cone.
   Considering two cones $\cK_1$ and $\cK_2$, $\cK_1 \subseteq \cK_2$ denotes the usual set inclusion. We use $\cK_1 \subset \cK_2$ to denote $\primalvector \in \cK_1 \minuszero \implies \primalvector \in \interior(\cK_2)$.
   When $\mathcal{A}$ denotes a finite set,
   we use $|\mathcal{A}|$ to denote its index set.
}

\section{$\cK$-cooperativity} \label{sec:background}

\subsection{Definitions and Properties}

We say that a continuous time linear system $\dot{x} = Ax, \, x \in \mathbb{R}^n,$ leaves a cone $\cK$ invariant if
$$x(0) \in \cK \implies x(t) \in \cK \text{ for all } t \geq 0$$
This is equivalent to the \emph{sub-tangentiality} condition
\cite[Theorem 3.11]{berman_nonnegative_1989}:
\begin{equation}
   x \in \cK,\, y \in \dual{\cK},\, \innerproduct{y}{x} = 0 \implies \innerproduct{y}{Ax} \geq 0
   \label{eq:subtang}
\end{equation}

Similarly, we say that the system contracts a cone $\cK$ if
$$x(0) \in \cK \minuszero \implies x(t) \in \interior(\cK) \text{ for all } t > 0$$
This is implied by the \emph{strict sub-tangentiality} condition
\cite[Theorems 3.7 and 3.26]{berman_nonnegative_1989}:
\begin{equation}
x \in \cK \minuszero,\, y \in \dual{\cK} \minuszero,\, \innerproduct{y}{x} = 0 \implies \innerproduct{y}{Ax} > 0
\label{eq:strict-subtang}
\end{equation}

If we find a cone that the system contracts, we certify that the system is strictly positive.
Strict positivity restricts the qualitative behavior of the system:
the dominant eigenvector is an attractor of the system.

Differential positivity (\cite{forni_differentially_2016}) generalizes the notion of positivity to the nonlinear setting. 
For nonlinear systems, $\dot{x} = f(x)$, this generalization is based on system linearization
and the notion of prolonged system dynamics:
$$ \dot{x} = f(x) \qquad \dot{\delta x} = \J{x} \delta x, $$
where $\J{x}$ is the Jacobian of the vector field at $x \in \mathbb{R}^n$.
Differential positivity is defined with respect to a cone field $\cK(x)$.
In this paper we will study the simpler case of a constant cone field $\cK$.
We say that a nonlinear system differentially contracts a cone $\cK$ if
\begin{equation}
\delta x(0) \in \cK \minuszero \implies \delta x(t) \in \interior(\cK)  \text{ for all } t > 0.
\label{eq:diff-cone-contr}
\end{equation}
In analogy with the linear case, we call these systems \emph{strictly differentially positive}.
Equation
\eqref{eq:diff-cone-contr} is implied by the \emph{strict differential sub-tangentiality} condition 
that requires that, for all $x \in \mathbb{R}^n$
\begin{equation}
   \begin{split}
      \delta x \in \cK \minuszero, \,y \in \dual{\cK} \minuszero,& \\
      \innerproduct{y}{\delta x} = 0 &\implies \innerproduct{y}{\J{x}\delta x} > 0
   \end{split}
\label{eq:diff-strict-subtang}
\end{equation}

Similarly to strict positivity in linear systems, strict differential positivity restricts the qualitative behavior of nonlinear systems:
in our setting, it implies that almost all bounded trajectories converge to some fixed point
\cite[Corollary 5]{forni_differentially_2016}.

The strict differential sub-tangentiality conditions provide a way to verify strict differential positivity.
We call systems that satisfy \eqref{eq:diff-strict-subtang} for some cone \emph{strictly $\cK$-cooperative}.
Since $\eqref{eq:diff-strict-subtang} \implies \eqref{eq:diff-cone-contr}$, strictly $\cK$-cooperative systems are strictly differentially positive systems and share the same convergence result.
For the rest of the paper, we attempt to verify or synthesize strictly $\cK$-cooperative systems.
The main obstacle in doing so is finding a $\cK$ for which \eqref{eq:diff-strict-subtang} holds.

For linear systems, strict positivity has a spectral characterization:
a linear system $\dot{x} = Ax$ is strictly positive
if and only if the rightmost eigenvalue of $A$ is simple
and real (\cite{vandergraft_spectral_1968}).
Unfortunately,
for a nonlinear system $\dot{x} = f(x)$,
this does not generalize to a sufficient
condition on the eigenvalues of the Jacobians $\J{x}$.
The presence of a rightmost eigenvalue in the Jacobian at every $x$ is only a \emph{necessary} condition.
This is analogous to how we cannot, generally, establish the convergence of all trajectories of a system to a unique fixed point from the analysis of
the Jacobian eigenvalues
(see related counter-examples in \cite[Example 4.22]{khalil_nonlinear_2002} and \cite{leonov_problems_2010}).

\section{General Formulation}

\subsection{Conical Relaxation and Robustness} \label{subsec:conical_relax}
To verify \eqref{eq:diff-strict-subtang} with finite computations
we relax the set of all Jacobians, $\{\J{x}\}$, to a \emph{finitely} generated set of matrices, $\cA$, such that for each $x \in \mathbb{R}^n$,
\begin{equation}
   \J{x} \in \conichull(\cA) \minuszero, \label{eq:conical_relax}
\end{equation}
where $\conichull(\cA)$ is defined as 
$$\conichull(\cA) = \left\{ A : A = \sum_{i=1}^k \posvector_i A_i , \,\posvector_i \geq 0\right\}$$

We refer to a set $\cA$ that satisfies \eqref{eq:conical_relax} as a \emph{conical relaxation} of $\{\J{x}\}$.
The use of a conical relaxation to verify \eqref{eq:diff-strict-subtang}
follows from the next Lemma.
\begin{lemma}[Convexity of Sub-tangentiality]
\label{lemma:conv_sub-tang}
If $\cA$ is a conical relaxation of $\{\J{x}\}$, 
and each $A_i \in \cA$ satisfies \eqref{eq:strict-subtang} with respect to the same cone $\cK$,
then $f(x)$ satisfies \eqref{eq:diff-strict-subtang}. 
\end{lemma}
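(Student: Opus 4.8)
The plan is to exploit the linearity of the scalar map $A \mapsto \innerproduct{y}{A \delta x}$ together with the conic decomposition guaranteed by the relaxation. First I would fix an arbitrary $x \in \mathbb{R}^n$ and invoke \eqref{eq:conical_relax} to write $\J{x} = \sum_{i=1}^k \posvector_i A_i$ with each $\posvector_i \geq 0$ and $\J{x} \neq 0$. I would then take any $\delta x \in \cK \minuszero$ and $y \in \dual{\cK} \minuszero$ with $\innerproduct{y}{\delta x} = 0$, and observe that these are exactly the hypotheses required to apply \eqref{eq:strict-subtang} to each generator $A_i$, reading $\delta x$ in the role of $x$ there.

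The central computation is the expansion
$$\innerproduct{y}{\J{x}\delta x} = \sum_{i=1}^k \posvector_i \innerproduct{y}{A_i \delta x}.$$
By the assumption that every $A_i$ satisfies \eqref{eq:strict-subtang}, each inner product $\innerproduct{y}{A_i \delta x}$ is strictly positive, and since each coefficient $\posvector_i$ is nonnegative, every summand is nonnegative.

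The one point that needs care, and the only place the argument could break, is ruling out the degenerate case in which the sum vanishes because all $\posvector_i = 0$. This is precisely where the exclusion of $0$ in \eqref{eq:conical_relax} is used: since $\J{x} \neq 0$, at least one coefficient, say $\posvector_j$, is strictly positive, so that term contributes $\posvector_j \innerproduct{y}{A_j \delta x} > 0$ while the remaining terms stay nonnegative, forcing $\innerproduct{y}{\J{x}\delta x} > 0$. As $x$, $\delta x$, and $y$ were arbitrary subject to the stated constraints, this establishes \eqref{eq:diff-strict-subtang}, so $f(x)$ is strictly $\cK$-cooperative. I expect no genuine obstacle here: the result is essentially the observation that strict sub-tangentiality, being a strict linear inequality in the operator $A$, is preserved under any nontrivial conic combination of operators that each satisfy it.
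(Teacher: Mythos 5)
Your proposal is correct and follows essentially the same route as the paper's own proof: expand $\innerproduct{y}{\J{x}\delta x}$ as $\sum_i \posvector_i \innerproduct{y}{A_i\delta x}$ by linearity, apply \eqref{eq:strict-subtang} to each generator, and use $\J{x} \neq 0$ to guarantee at least one strictly positive coefficient. The paper's proof is just a more compressed version of the same argument, including the same key observation about the nondegeneracy of the conic combination.
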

\begin{proof}
Assuming left-hand side of \eqref{eq:diff-strict-subtang},
\begin{equation*}
   \innerproduct{y}{\J{x}\delta x} = \innerproduct{y}{\left(\sum_{i=1}^k \posvector_i^x A_i\right)\delta x} = \sum_{i=1}^k \posvector_i^x (\innerproduct{y}{A_i\delta x}) > 0, 
\end{equation*}
where the strict inequality follows from \eqref{eq:strict-subtang}, using the fact that at least one $\posvector_i^x >0$ (since $\J{x} \neq 0$). 
\end{proof}

We can also use conical relaxations to analyze robustness to uncertainty in the dynamics:
\begin{itemize}
   \item We get robustness to some perturbations `for free' since our certificate also holds for any $\dot{x} = f(x) + g(x)$ where $\partial g(x) \in \conichull(\cA)$.
   \item We can adapt a conical relaxation to incorporate specific perturbations.
   If our perturbed system can be represented in the linearizations by $\J{x} + \mathcal{Q}$,
   where $\mathcal{Q}$ is a given family of perturbations, we can use a new conical relaxation $\cA_\mathcal{Q}$ that satisfies $\J{x} + \mathcal{Q} \in \conichull(\cA_\mathcal{Q}) \minuszero$.
\end{itemize}

We therefore treat the problem of finding cones for system analysis as the problem of finding a common cone that is contracted by (all elements of) a given $\cA$.

\begin{remark}
Naively producing a tight conical relaxation for a general system can lead to a combinatorial explosion in the number of elements in $\cA$.
We do not presently focus on the general construction of conical relaxations and instead manually derive them for the applications considered.
\end{remark}

\subsection{Cone Representations}
In our search for cones, we limit ourselves to the family of \emph{polyhedral} cones.
Polyhedral cones can be represented in two ways:
\begin{description}
	\item[R-representation] given matrix $R$,
	\begin{equation}
	\cKR(R) = \{\primalvector: \primalvector = R\posvector, \posvector\geq 0\} \label{eq:r-rep} \
	\end{equation}
	\item[H-representation] given matrix $H$, 
	\begin{equation}
	\cKH(H) = \{\primalvector: H\primalvector \geq 0 \} \label{eq:h-rep} \
	\end{equation}
\end{description}
Assuming the cones are proper, strict inequalities characterize their interior.
We focus on verification and synthesis of $\cK$-cooperative systems with respect to
R-representation cones, noting that the analysis and algorithms can be extended to H-representation cones through duality.
Specifically, if we introduce $\cA^* = \{A^T: A \in \cA\}$,
then it can be shown that using $\cA$ to verify $\cK$-cooperativity with respect to $\cKH(H)$
yields the same conditions as
using $\cA^*$ and $\cKR(H^T)$.

\subsection{Feasibility Formulation}
We reformulate \eqref{eq:subtang} and \eqref{eq:strict-subtang} for R-representation polyhedral cones:
\begin{lemma}
   \label{lemma:coni_r_test}
   For a given matrix $A$ and proper cone $\cKR(R)$,
   $A$ satisfies \eqref{eq:subtang} for $\cKR(R)$
   if (and only if) there exists a matrix $P$ with nonnegative off-diagonal entries ($\offdiag(P) \geq 0$) such that:
   \begin{equation}
      AR = RP \label{eq:main_condition}
   \end{equation}
   Furthermore, 
   $A$ also satisfies \eqref{eq:strict-subtang} for $\cKR(R)$
   if (and only if) there exists a $P$ with $\offdiag(P) > 0$.
\end{lemma}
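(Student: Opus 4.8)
The plan is to dualize the cone and reduce both equivalences to a column-by-column statement about the generators $r_j$ (the columns of $R$). The dual cone here is $\dual{\cKR(R)} = \{y : R^T y \geq 0\}$, so $y \in \dual{\cKR(R)}$ just means $\innerproduct{y}{r_i} \geq 0$ for every $i$. The single algebraic fact that drives everything is complementary slackness: if $x = Rp$ with $p \geq 0$ and $y \in \dual{\cKR(R)}$ satisfy $\innerproduct{y}{x} = 0$, then $\sum_i p_i \innerproduct{y}{r_i} = 0$ is a sum of nonnegative terms, forcing $p_i \innerproduct{y}{r_i} = 0$ for every $i$.

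For the ``if'' direction I would substitute $AR = RP$ directly. Writing $s_i := \innerproduct{y}{r_i} \geq 0$, a short computation gives $\innerproduct{y}{Ax} = \innerproduct{y}{RPp} = \sum_{i,j} s_i P_{ij} p_j$. The identity $s_i p_i = 0$ annihilates every diagonal term $s_i P_{ii} p_i$, so only off-diagonal terms survive; with $\offdiag(P) \geq 0$, $s_i \geq 0$ and $p_j \geq 0$ each surviving term is nonnegative, giving \eqref{eq:subtang}. For the strict version I additionally note that properness makes the generators span $\mathbb{R}^n$, so $y \neq 0$ forces some $s_{i_0} > 0$ and $x \neq 0$ forces some $p_{j_0} > 0$; complementary slackness then forces $i_0 \neq j_0$, and with $\offdiag(P) > 0$ the term $s_{i_0} P_{i_0 j_0} p_{j_0}$ is strictly positive while the rest stay nonnegative, giving \eqref{eq:strict-subtang}.

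For the converse (``only if'') I would argue geometrically on each column. The requirement $AR = RP$ with $\offdiag(P)\ge 0$ is exactly the statement that each $A r_j$ lies in $C_j := \mathrm{span}(r_j) + \conichull(\{r_i : i \neq j\})$. If some $A r_j \notin C_j$, then since $C_j$ is finitely generated (hence closed and convex) a separating hyperplane yields $y$ with $\innerproduct{y}{A r_j} < 0$ and $\innerproduct{y}{c} \geq 0$ for all $c \in C_j$; testing against the generators $\pm r_j$ and $r_i$ shows $\innerproduct{y}{r_j} = 0$ and $y \in \dual{\cKR(R)}$, contradicting \eqref{eq:subtang} at $x = r_j$. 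Reading off the coefficients of $A r_j \in C_j$ then produces $P$. For the strict converse, the same separate-and-test argument shows that \eqref{eq:strict-subtang} at $x = r_j$ is precisely the dual characterization of $A r_j \in \interior(C_j)$ for the full-dimensional cone $C_j$; from interiority, $A r_j - \epsilon \sum_{i \neq j} r_i \in C_j$ for small $\epsilon > 0$, and rewriting this membership shifts every off-diagonal coefficient up by $\epsilon$, yielding a representation with $\offdiag(P) > 0$.

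The main obstacle is the strict converse. Two points need care: first, $C_j$ is not pointed (it contains the whole line $\mathrm{span}(r_j)$), so I must confirm that the interior-versus-dual characterization still applies, which it does because $C_j$ is full-dimensional; second, when the generators are linearly dependent the representation $A r_j = \sum_i P_{ij} r_i$ is not unique, so I cannot simply hope that an arbitrary nonnegative representation is already strictly positive, and must instead manufacture strict positivity through the $\epsilon$-perturbation away from the boundary that interiority guarantees. The ``if'' direction, by contrast, is an essentially mechanical computation once complementary slackness is in hand.
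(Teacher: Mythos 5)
Your proof is correct, and it is worth comparing to the paper's. On the ``if'' direction you and the paper are doing the same substitution $\innerproduct{y}{Ax} = \innerproduct{y}{RPp}$ with different bookkeeping for the diagonal of $P$: the paper shifts by $\alpha I$, observing that $\innerproduct{y}{x}=0$ makes the value independent of $\alpha$ and that $P+\alpha I \geq 0$ (resp.\ $>0$) for large $\alpha$ exactly when $\offdiag(P)\geq 0$ (resp.\ $>0$); you instead invoke complementary slackness $s_i p_i = 0$ to annihilate the diagonal terms directly. These are equivalent, and both correctly handle the strict case (your observation that $s_{i_0}>0$ and $p_{j_0}>0$ force $i_0\neq j_0$ is the same fact the $\alpha$-shift hides). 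The real divergence is the ``only if'' direction: the paper does not prove it at all but defers to Theorems 3.3.9 and 3.3.41 of Berman and Plemmons, whereas you give a self-contained column-by-column argument --- reformulating $AR=RP$, $\offdiag(P)\geq 0$ as $Ar_j \in C_j = \mathrm{span}(r_j)+\conichull(\{r_i\}_{i\neq j})$, separating when membership fails, and, for the strict case, identifying \eqref{eq:strict-subtang} at $x=r_j$ with $Ar_j\in\interior(C_j)$ and manufacturing strictly positive off-diagonal coefficients via the $\epsilon$-perturbation $Ar_j-\epsilon\sum_{i\neq j}r_i\in C_j$. Your two flagged concerns (the interior-versus-dual characterization for the non-pointed but full-dimensional $C_j$, and the non-uniqueness of nonnegative representations when the generators are dependent) are exactly the right ones, and your resolutions are sound: the characterization needs only closedness and nonempty interior, and the $\epsilon$-shift sidesteps non-uniqueness entirely. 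What your route buys is a complete, elementary proof of the converse that makes visible where properness (full-dimensionality of $\cKR(R)$, hence of $C_j$) is actually used; what the paper's route buys is brevity and a pointer to the general cross-positivity theory.
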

\begin{proof}
   If: Let $\primalvector \in \cK \minuszero,\, \dualvector \in \dual{\cK} \minuszero,\, \innerproduct{\dualvector}{\primalvector} = 0$, and $\cKR(R)$ is proper.
  
   Then, for any $\alpha \in \mathbb{R}$, $\innerproduct{\dualvector}{A \primalvector} = \innerproduct{\dualvector}{(A +\alpha I)\primalvector} = \innerproduct{\dualvector}{(A+\alpha I)R\posvector} = \innerproduct{\dualvector}{R(P+\alpha I)\posvector}$,
   where the second identity follows from the additional identity $r = Rp$, for some $p \geq 0$. 
   
   From our assumptions we have $0 \neq \posvector \geq 0$ and $0 \neq (\dualvector^T R) \geq 0$.
   As such we satisfy \eqref{eq:subtang} for $A$ if $(P+\alpha I) \geq 0$.
   Since we can choose $\alpha$ freely, this is equivalent to $\offdiag(P) \geq 0$. 
   Similarly, we satisfy \eqref{eq:strict-subtang} for $A$ if $\offdiag(P) > 0$.

   Only if: See \cite[Theorems 3.3.9 and 3.3.41]{berman_nonnegative_1989}.
\end{proof}

Putting everything together:
\begin{proposition}[Feasibility Formulation]
\label{prop:feas}
A nonlinear system $\dot{x} = f(x)$ is
strictly $\cK$-cooperative if, given an $\cA$ that satisfies \eqref{eq:conical_relax},
there exists a matrix $R$ satisfying:
\begin{itemize}
   \item $\cKR(R)$ is a proper cone
   \item For each $A_i \in \cA$, there exists a matrix $\posmatrix_i$ such that $A_i R = R\posmatrix_i$ and $\offdiag(\posmatrix_i) > 0$
\end{itemize}
\end{proposition}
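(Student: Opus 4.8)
The plan is to chain together Lemmas \ref{lemma:conv_sub-tang} and \ref{lemma:coni_r_test}, after which the proposition reduces to invoking the definition of strict $\cK$-cooperativity. The substantive work has already been done in those two lemmas, so the proof is essentially a matter of assembling them in the correct order.

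First I would apply Lemma \ref{lemma:coni_r_test} to each $A_i \in \cA$ in turn. The proposition hypothesizes that $\cKR(R)$ is proper and that there exists a matrix $\posmatrix_i$ with $A_i R = R\posmatrix_i$ and $\offdiag(\posmatrix_i) > 0$; this is exactly the sufficient condition in the second part of Lemma \ref{lemma:coni_r_test}. Hence each $A_i$ satisfies the strict sub-tangentiality condition \eqref{eq:strict-subtang} with respect to the cone $\cKR(R)$. The point to underline here is that the cone is \emph{the same} $\cKR(R)$ for every index $i$, since all the factorizations share the common matrix $R$; this common-cone property is precisely what the next step requires.

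Second I would invoke Lemma \ref{lemma:conv_sub-tang}. By hypothesis $\cA$ satisfies \eqref{eq:conical_relax}, so it is a conical relaxation of $\{\J{x}\}$, and we have just established that every $A_i$ satisfies \eqref{eq:strict-subtang} with respect to the single cone $\cKR(R)$. Lemma \ref{lemma:conv_sub-tang} then yields that $f(x)$ satisfies the strict differential sub-tangentiality condition \eqref{eq:diff-strict-subtang} for the cone $\cKR(R)$. Finally, a system satisfying \eqref{eq:diff-strict-subtang} for some proper cone is, by definition, strictly $\cK$-cooperative with $\cK = \cKR(R)$, which closes the argument.

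I do not anticipate any genuine obstacle, as the content of the proposition lives entirely in the two preceding lemmas, and the proposition merely packages them into an end-to-end feasibility certificate. The only matters deserving attention are bookkeeping ones: checking that we need only the sufficiency (\emph{if}) directions of both lemmas, which matches the one-directional \emph{if} phrasing of the proposition itself, and confirming that properness of $\cKR(R)$, assumed in the first bullet, is indeed available wherever the lemmas invoke it.
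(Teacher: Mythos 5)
Your proposal is correct and follows exactly the paper's own route: the paper's proof is the one-line statement that the proposition is implied by combining Lemmas \ref{lemma:conv_sub-tang} and \ref{lemma:coni_r_test}, which is precisely the chain you spell out. Your additional remarks about needing only the \emph{if} directions and about the common cone $\cKR(R)$ are accurate bookkeeping, not a deviation.
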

\begin{proof}
Implied by combining Lemmas \ref{lemma:conv_sub-tang} and \ref{lemma:coni_r_test}.
\end{proof}

If $R$ is left as a free parameter this test is intractable.
On the other hand, when $R$ is
fixed the test can be carried out using 
Linear Programming (LP). 
However, this LP test only
gives a `Yes' or `No' answer.
For our goal of \emph{finding} a suitable $R$
we are interested in
introducing a measure of how far a given $R$ is from satisfying Proposition \ref{prop:feas}.

\subsection{Measuring Distance to $\cK$-cooperativity}
We want to quantify how far a given set of positive matrices is from contracting a given candidate cone.
This measure should be independent of the representation of the cone and 
compatible with LP.
To this end,
we introduce the set of widened operators \widenedset, where for each $\widenedelement_i \in \widenedset$:
\begin{equation}
   \widenedelement_i = A_i + \widening_i \outerproduct{\primalvector_i}{\dualvector_i} \quad (A_i \in \cA)
\end{equation}
where $\widening_i$ is a scalar `widening coefficient', $\primalvector_i$ is some fixed vector in $\interior(\cK)$, and $\dualvector_i$ is some fixed vector in $\interior(\dual{\cK})$
(we explore some strategies for selecting these later).
The rank one matrix $\outerproduct{\primalvector_i}{\dualvector_i}$
projects all points $x$ to $(\innerproduct{\dualvector_i}{x})\primalvector_i$.

\begin{lemma}
   \label{lemma:finite_widening}
   For any given matrix $A$, cone $\cK$, $\primalvector \in \interior(\cK)$, and $\dualvector \in \interior(\dual{\cK})$, there will exist some finite $\widening$
   such that $[A + \widening\outerproduct{\primalvector}{\dualvector}]$ satisfies \eqref{eq:subtang} with respect to $\cK$ 
\end{lemma}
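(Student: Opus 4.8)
The plan is to compute the left-hand side of \eqref{eq:subtang} for the widened operator directly and show that the widening term dominates any negative contribution once $\widening$ is large enough. For any $x$ and $y$,
$$\innerproduct{y}{(A + \widening\outerproduct{\primalvector}{\dualvector})x} = \innerproduct{y}{Ax} + \widening\,\innerproduct{y}{\primalvector}\,\innerproduct{\dualvector}{x},$$
so the only effect of $\widening$ is to scale the rank-one term $\innerproduct{y}{\primalvector}\,\innerproduct{\dualvector}{x}$. If this term is strictly positive whenever the sub-tangentiality test is nontrivial, then taking $\widening$ large enough forces the sum to be nonnegative.

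First I would establish that this rank-one coefficient is strictly positive on the relevant set. The condition \eqref{eq:subtang} is trivial when $x = 0$ or $y = 0$, so assume $x \in \cK\minuszero$ and $y \in \dual{\cK}\minuszero$ (I take $\cK$ proper, as in the rest of the paper). Because $\primalvector \in \interior(\cK)$, any nonzero $y \in \dual{\cK}$ satisfies $\innerproduct{y}{\primalvector} > 0$: if it were zero, then $\primalvector - \epsilon y \in \cK$ for small $\epsilon > 0$ by interiority, giving the contradiction $0 \leq \innerproduct{y}{\primalvector - \epsilon y} = -\epsilon\norm{y}{2}^2 < 0$. Applying the same argument in the dual, using $\dualvector \in \interior(\dual{\cK})$ together with $\dual{\dual{\cK}} = \cK$ for a proper cone, every nonzero $x \in \cK$ satisfies $\innerproduct{\dualvector}{x} > 0$. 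Hence $\innerproduct{y}{\primalvector}\,\innerproduct{\dualvector}{x} > 0$ on the nontrivial part of the constraint set.

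The remaining and main obstacle is uniformity: I need a single finite $\widening$ that works simultaneously for all admissible $(x, y)$, not a separate one for each pair. I would handle this by homogeneity and compactness. Since every quantity is positively homogeneous in $x$ and in $y$ separately, it suffices to work on the normalized set $S = \{(x, y) : x \in \cK,\, y \in \dual{\cK},\, \innerproduct{y}{x} = 0,\, \norm{x}{2} = \norm{y}{2} = 1\}$, which is closed and bounded, hence compact. On $S$ the map $(x,y)\mapsto\innerproduct{y}{Ax}$ is continuous and $(x,y)\mapsto\innerproduct{y}{\primalvector}\,\innerproduct{\dualvector}{x}$ is continuous and strictly positive, so the ratio $-\innerproduct{y}{Ax}/(\innerproduct{y}{\primalvector}\,\innerproduct{\dualvector}{x})$ is continuous on $S$ and attains a finite maximum $\widening^*$. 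Taking any $\widening \geq \max\{0, \widening^*\}$ then yields $\innerproduct{y}{Ax} + \widening\,\innerproduct{y}{\primalvector}\,\innerproduct{\dualvector}{x} \geq 0$ on $S$, and by homogeneity on the whole admissible set, which is exactly \eqref{eq:subtang} for the widened operator (the case of empty $S$ is vacuous). I expect the compactness step to be the delicate part; the strict positivity of the rank-one term is a routine consequence of the interior-point characterization of dual cones.
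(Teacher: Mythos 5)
Your proof is correct and follows essentially the same route as the paper's: both normalize $x$ and $y$, use $\primalvector \in \interior(\cK)$ and $\dualvector \in \interior(\dual{\cK})$ to bound the rank-one coefficient $(\innerproduct{y}{\primalvector})(\innerproduct{\dualvector}{x})$ strictly away from zero on the compact normalized constraint set, and then choose $\widening$ large enough to dominate $\innerproduct{y}{Ax}$. The only difference is presentational: the paper exhibits the explicit threshold $\norm{A}{2}/(\alpha\beta)$ from the two infima, whereas you maximize the ratio $-\innerproduct{y}{Ax}/\bigl((\innerproduct{y}{\primalvector})(\innerproduct{\dualvector}{x})\bigr)$, and you spell out the interior-point and compactness justifications that the paper leaves implicit.
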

\begin{proof}
   We are only concerned with the sign of $\innerproduct{y}{[A + \widening\outerproduct{\primalvector}{\dualvector}]x}$, so we fix $||y||_2 = ||x||_2 = 1$
   (the implication is trivial when $y$ or $x$ are 0).
   Because $\primalvector \in \interior(\cK)$, and $\dualvector \in \interior(\dual{\cK})$,
   $\innerproduct{y}{\primalvector} > 0$ and $\innerproduct{\dualvector}{x} > 0$.
   We denote $\inf_{y}(\innerproduct{y}{\primalvector}) = \alpha > 0$ and $\inf_{x}(\innerproduct{\dualvector}{x}) = \beta > 0$.

   Then:
   \resizebox{.9\hsize}{!}{
   $
   \innerproduct{y}{[A + \widening\outerproduct{\primalvector}{\dualvector}]x} =
   \innerproduct{y}{Ax} + \widening(\innerproduct{y}{\primalvector})(\innerproduct{\dualvector}{x}) \geq -||A||_2 + \widening \alpha \beta
   $
   } 
   
   As such, we can guarantee that $\innerproduct{y}{[A + \widening\outerproduct{\primalvector}{\dualvector}]x} \geq 0$ if $\widening \geq ||A||_2/(\alpha \beta)$.
\end{proof}

\begin{lemma}
   \label{lemma:strict_widening}
   Given $[A + \widening\outerproduct{\primalvector}{\dualvector}]$ and $\cK$ that satisfy \eqref{eq:subtang} with $\primalvector \in \interior(\cK)$ and $\dualvector \in \interior(\dual{\cK})$, $[A + \widening^*\outerproduct{\primalvector}{\dualvector}]$ will satisfy \eqref{eq:strict-subtang} for all $w^* > w$ with the same $\cK$. 
\end{lemma}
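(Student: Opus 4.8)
The plan is to exploit the affine dependence on the widening coefficient. Writing $\epsilon = \widening^* - \widening > 0$, the widened operator splits as $[A + \widening^*\outerproduct{\primalvector}{\dualvector}] = [A + \widening\outerproduct{\primalvector}{\dualvector}] + \epsilon\outerproduct{\primalvector}{\dualvector}$. By hypothesis the first summand already satisfies the non-strict condition \eqref{eq:subtang}, so it contributes a nonnegative term; the task is then to show that the added $\epsilon\outerproduct{\primalvector}{\dualvector}$ term is strictly positive on the set of $(x,y)$ appearing in \eqref{eq:strict-subtang}, thereby upgrading $\geq 0$ to $> 0$.

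Concretely, I would fix any $x \in \cK\minuszero$ and $y \in \dual{\cK}\minuszero$ with $\innerproduct{y}{x} = 0$, as required by \eqref{eq:strict-subtang}, and expand
\begin{equation*}
   \innerproduct{y}{[A + \widening^*\outerproduct{\primalvector}{\dualvector}]x}
   = \innerproduct{y}{[A + \widening\outerproduct{\primalvector}{\dualvector}]x}
   + \epsilon\,(\innerproduct{y}{\primalvector})(\innerproduct{\dualvector}{x}),
\end{equation*}
using $\outerproduct{\primalvector}{\dualvector}x = (\innerproduct{\dualvector}{x})\primalvector$. Since $x \in \cK$, $y \in \dual{\cK}$, and $\innerproduct{y}{x} = 0$, the assumption that $[A + \widening\outerproduct{\primalvector}{\dualvector}]$ obeys \eqref{eq:subtang} makes the first term on the right nonnegative.

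It then remains to show $(\innerproduct{y}{\primalvector})(\innerproduct{\dualvector}{x}) > 0$, since $\epsilon > 0$. Here I would reuse the interior facts already invoked in the proof of Lemma \ref{lemma:finite_widening}: because $\dualvector \in \interior(\dual{\cK})$ and $x \in \cK\minuszero$, we have $\innerproduct{\dualvector}{x} > 0$, and dually, because $\primalvector \in \interior(\cK)$ and $y \in \dual{\cK}\minuszero$, we have $\innerproduct{y}{\primalvector} > 0$ (the latter using that $\cK$ is the dual of $\dual{\cK}$, as $\cK$ is proper). Hence the right-hand side is bounded below by $0 + \epsilon\cdot(\text{strictly positive}) > 0$, which is exactly \eqref{eq:strict-subtang} for $[A + \widening^*\outerproduct{\primalvector}{\dualvector}]$.

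The argument is essentially strict-versus-nonstrict bookkeeping, and the only point deserving care is the strict positivity of the two inner products: a nonzero dual element only guarantees nonnegativity against a generic cone element, so strictness genuinely relies on the other factor lying in the interior. Both $\primalvector$ and $\dualvector$ do so by assumption, so there is no real obstacle here beyond citing interiority, exactly as in the preceding lemma.
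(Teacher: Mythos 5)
Your proposal is correct and follows essentially the same route as the paper's proof: split off $\epsilon\outerproduct{\primalvector}{\dualvector}$ with $\epsilon = \widening^* - \widening > 0$, bound the first term by the non-strict condition \eqref{eq:subtang}, and use interiority of $\primalvector$ and $\dualvector$ to make the added term strictly positive. The only difference is that you spell out why $\innerproduct{y}{\primalvector} > 0$ (via $\cK$ being the dual of $\dual{\cK}$ for a proper cone), a point the paper asserts without comment.
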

\begin{proof}
   Since $[A + \widening\outerproduct{\primalvector}{\dualvector}]$ satisfy \eqref{eq:subtang},
   $\innerproduct{y}{[A + \widening\outerproduct{\primalvector}{\dualvector}]x} \geq 0$.
   Also, since $\primalvector \in \interior(\cK)$, and $\dualvector \in \interior(\dual{\cK})$,
   $\innerproduct{y}{\primalvector} > 0$ and $\innerproduct{\dualvector}{x} > 0$.
   Letting $\bar{\widening} = \widening^*-\widening > 0$,
   $\innerproduct{y}{[A + \widening^*\outerproduct{\primalvector}{\dualvector}]x} =  
   \innerproduct{y}{[A + \widening\outerproduct{\primalvector}{\dualvector}]x} + \bar{\widening}(\innerproduct{y}{\primalvector})(\innerproduct{\dualvector}{x}) > 0$
\end{proof}

Lemmas \ref{lemma:finite_widening} and \ref{lemma:strict_widening} mean that we can use $\{\widening_i\}$ to quantitatively measure how close we are to a contracting cone:
if we leave $\{\widening_i\}$ as free parameters, the tests in Lemma \ref{lemma:coni_r_test} will always be feasible for $\widenedelement_i \in \widenedset$ and a fixed $\cKR(R)$,
and, if we find $\{\widening_i\} < 0$, we can conclude strict $\cK$-cooperativity.

To summarize:
\begin{proposition}[Optimization Formulation]
   \label{prop:opt}
Consider the following optimization problem:
\begin{alignat}{2}
   &\!\min_{\{\widening_i,\posmatrix_i\}}\,&\quad& \widening \label{eq:opt} \\
   &\text{subject to, $\forall\,i \in |\cA|$:}& & \widening \geq \widening_i, \nonumber \\
   & & &\offdiag(\posmatrix_i) \geq 0,\,  \nonumber \\
   & & & \left[A_i + \widening_i \outerproduct{\primalvector_i}{\dualvector_i}\right]R = R\posmatrix_i \nonumber
\end{alignat}
Where $|\cA|$ is the index set of $\cA$,
$\primalvector_i \in \interior(\cKR(R))$,
and $\dualvector_i \in \interior(\dual{\cKR(R)})$.

If we find a solution with $\widening < 0$,
we can conclude strict $\cK$-cooperativity with respect to the corresponding $\cKR(R)$
for all nonlinear systems $\dot{x} = f(x)$
that satisfy \eqref{eq:conical_relax} for the $\cA$ used.
\end{proposition}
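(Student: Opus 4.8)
The plan is to prove only the stated sufficiency direction: that any feasible point of the program with objective $\widening < 0$ certifies strict $\cK$-cooperativity. The idea is to peel the widening off each operator to recover strict sub-tangentiality of the original $A_i$, and then reassemble the operators using the convexity result of Lemma \ref{lemma:conv_sub-tang}. Note that feasibility of the program itself is not needed for this direction; I am simply handed a feasible point whose objective is negative.

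First I would exploit the epigraph structure. The constraints force $\widening \geq \widening_i$ for every $i \in |\cA|$, so $\widening < 0$ immediately yields $\widening_i < 0$ for all $i$. I would then read the remaining constraints through Lemma \ref{lemma:coni_r_test}. For each $i$, the equality $[A_i + \widening_i \outerproduct{\primalvector_i}{\dualvector_i}]R = R\posmatrix_i$ together with $\offdiag(\posmatrix_i) \geq 0$ is exactly the non-strict feasibility condition of that lemma applied to the widened operator $\widenedelement_i = A_i + \widening_i \outerproduct{\primalvector_i}{\dualvector_i}$. Hence each $\widenedelement_i$ satisfies \eqref{eq:subtang} with respect to $\cKR(R)$.

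The crux is the ``un-widening'' step via Lemma \ref{lemma:strict_widening}, applied with base operator $A_i$, widening coefficient $\widening_i$, and target coefficient $\widening^* = 0$. Since $\widenedelement_i$ satisfies \eqref{eq:subtang} and $0 > \widening_i$, the lemma gives that $A_i + 0 \cdot \outerproduct{\primalvector_i}{\dualvector_i} = A_i$ satisfies the strict condition \eqref{eq:strict-subtang} with respect to $\cKR(R)$. This is the one genuinely non-mechanical observation, and it is where the strict negativity of $\widening$ is used: reading the raw matrix $A_i$ as the zero-widening of itself, which still lies strictly above the negative feasible coefficient $\widening_i$, is precisely what upgrades the non-strict certificate on $\widenedelement_i$ to a strict one on $A_i$. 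Had we only $\widening = 0$, this step would fail.

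Finally, having shown that every $A_i \in \cA$ satisfies \eqref{eq:strict-subtang} for the common proper cone $\cKR(R)$ — properness being implicit in the requirement that the interiors containing $\primalvector_i$ and $\dualvector_i$ be nonempty — I would invoke Lemma \ref{lemma:conv_sub-tang}: for any $\cA$ satisfying the conical relaxation \eqref{eq:conical_relax}, this implies $f(x)$ satisfies \eqref{eq:diff-strict-subtang}, which is exactly strict $\cK$-cooperativity with respect to $\cKR(R)$. The main obstacle is conceptual rather than computational: once one recognizes that the sign $\widening_i < 0$ is what carries the whole argument, the proof is a direct chaining of the earlier lemmas.
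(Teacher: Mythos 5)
Your argument is correct and is essentially the paper's own proof made explicit: the paper simply states that the result follows by combining Lemmas \ref{lemma:conv_sub-tang}--\ref{lemma:strict_widening}, and your chain --- Lemma \ref{lemma:coni_r_test} to certify \eqref{eq:subtang} for each widened operator, Lemma \ref{lemma:strict_widening} with $\widening^* = 0 > \widening_i$ to un-widen back to a strict certificate for each $A_i$, and Lemma \ref{lemma:conv_sub-tang} to pass to the nonlinear system --- is exactly that combination spelled out. Your side observations (that Lemma \ref{lemma:finite_widening} is needed only for feasibility of the program rather than for the stated implication, and that properness of $\cKR(R)$ is implied by the existence of the required interior points) are also correct.
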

\begin{proof}
Implied by combining Lemmas 
\ref{lemma:conv_sub-tang}-\ref{lemma:strict_widening}.
\end{proof}

We can now make use of $\widening$ to iteratively refine an initial candidate cone $R$, 
searching for a final contracting cone, as described in Section \ref{subsec:ver_algo}.

\section{Verifying $\cK$-cooperativity} \label{sec:sys_anal}
\subsection{Necessary Conditions and Geometric Constraints} \label{subsec:ver_necessary}

In Section \ref{sec:background} we mentioned that 
to verify strict $\cK$-cooperativity we need to find a contracting cone
but that the linearizations directly provide us with some necessary conditions.
We will now use Propositions 4-6 from \cite{kousoulidis_finding_2019} to
elaborate on the necessary conditions and the geometric constraints imposed
by the linearizations.

Firstly, every $A_i \in \cA$ has to be strictly positive, i.e. have a simple, real right-most eigenvalue.
We refer to such eigenvalues as dominant. We also apply the same terminology to their associated right and left eigenvectors, which we denote by $\domright_i$ and $\domleft_i$, and scale such that $\norm{\domleft_i}{1} = 1$ and $\innerproduct{\domleft_i}{\domright_i} = 1$. 
The dominant eigenvectors tell us that all trajectories of $\dot{x} = A_i x$ that start in the open half-space $\innerproduct{\domleft_i}{x(0)} > 0$ will converge (as rays) to the ray $\posvector_0 \domright_i$, for some $0 < \posvector_0 \in \mathbb{R}$.

It follows that for a cone $\cK$ to be contracted by each $A_i$, there must exist orientations of the dominant eigenvectors $\domright_i$ and $\domleft_i$ such that $\domright_i \in \interior(\cK)$ and $\domleft_i \in \interior(\dual{\cK})$.
This must hold for each $A_i \in \cA$.
For this to be possible, there must exist an orientation of the dominant eigenvectors such that $\innerproduct{\domleft_i}{\domright_j} > 0$. 
We can efficiently find such an orientation if it exists or conclude that it does not 
(see Algorithm 1 in \cite{kousoulidis_finding_2019}).
If it does not exist, we can again exclude $\cK$-cooperativity.
If it does exist, it is unique (up to the orientation of one of the eigenvectors).
We then combine our dominant eigenvectors to form two cones, $\cKRI$ and $\cKHO$:
$\cKRI$ is a R-representation polyhedral cone formed from right dominant eigenvectors $\domright_i$, while $\cKHO$ is a H-representation polyhedral cone formed from left dominant eigenvectors $\domleft_i$.
We enforce that $\domright_i \in \interior(\cK)$ and $\domleft_i \in \interior(\dual{\cK})$ by only considering cones $\cK$ that satisfy:
\begin{equation}
   \cKRI \subset \cK \subset \cKHO \label{eq:geo_assump}
\end{equation}

Since $\domright_i \in \interior(\cK)$ and $\domleft_i \in \interior(\dual{\cK})$,
they serve as ideal candidates for $\{\primalvector_i\}$ and $\{\dualvector_i\}$ in Proposition \ref{prop:opt}. 
By using them to produce $\widenedset$,
each new matrix $\widenedelement_i$ will have the same eigenvectors as $A_i$,
will have its dominant eigenvalue shifted by $\widening_i$ to the right,
and will have all other eigenvalues unaffected.

\subsection{The Algorithm}
\label{subsec:ver_algo}

The first step of the verification algorithm is to compute an initial candidate cone.
We want this cone to satisfy \eqref{eq:geo_assump}.
This is because \eqref{eq:geo_assump} is a necessary condition for an invariant cone
and, since we set $\primalvector_i = \domright_i$, $\dualvector_i = \domleft_i$,
\eqref{eq:geo_assump}
also guarantees that we can solve \eqref{eq:opt}
for a finite $\widening$.
Additionally, because we will not be adding new extreme rays
in other steps of the algorithm,
we want to be able to specify the number of extreme rays
$\numrays$ ($\geq n$) as a parameter to our initialization algorithm
(generally, more oscillatory dynamics require cones
with a larger number of extreme rays - \cite{benvenuti_eigenvalue_2004}).

To achieve this, we begin with matrix $R_{inner}$.
Then, until our matrix has $\numrays$ columns, we expand it by appending randomly perturbed columns of $R_{inner}$,
rejecting new vectors that
do not increase the number of extreme rays in our cone or are not inside $\cKHO$.  
To ensure the strict inclusion $\cKRI \subset \cKR(R^{(0)})$, we subtract some small enough positive multiple of
the centroid ray of $\cKRI$ from each column of our matrix.
In this way we now have a matrix $R^{(0)}$
that provides a R-representation of a suitable initial cone.

We then proceed to solve \eqref{eq:opt} with $R = R^{(0)}$,
obtaining values for $\{\widening_i, \posmatrix_i\}$ in the process.
If $\widening < 0$, our system is $\cK$-cooperative and our search is done.
If it is not,
we introduce a new optimization problem that 
approximates 
the effect of changes to $R$ on the solution to \eqref{eq:opt}.
This is done by using the values we have for
$R$ and $\{\widening_i, \posmatrix_i\}$
to linearize the equality constraints of \eqref{eq:opt}
(which are nonlinear when $R$ is treated as a variable)
about the previous solution by taking their derivatives.
The variations of 
$(R, \{\widening_i, \posmatrix_i\})$
about their previous values
are represented by the new variables
$(\delta R, \{\delta \widening_i, \delta \posmatrix_i\})$.
The linearized equality constraints are given by:
$$
[A_i + \widening_i \outerproduct{\domright_i}{\domleft_i}](\delta R)
+ (\delta \widening_i) \outerproduct{\domright_i}{\domleft_i}R
= (\delta R)\posmatrix_i + R(\delta\posmatrix_i)
$$

We also have to ensure that \eqref{eq:geo_assump} holds.
As such, when solving the approximate problem, we need to impose that:
\begin{equation}
\cKRI \subset \cKR(R+\delta R) \subset \cKHO
\label{eq:geo_assump_new}
\end{equation}

The right hand side set inclusion in \eqref{eq:geo_assump_new}
ends up being a linear constraint on $\delta R$ that we can include directly. It can be expressed as
$H_{outer}(R+ \delta R) > 0$.
However, the left-hand side constraint isn't linear.
Instead of also approximating this constraint
via its derivative, we characterize a linear subset
of $\delta R$ for which it
is \emph{guaranteed} to hold.
To guarantee
$\cKRI \subset \cKR(R+\delta R)$,
we first introduce a new matrix $R'$
that satisfies $\cKRI \subset \cKR(R') \subset \cKR(R)$ and has the same dimensions as $R$.
We can obtain such a matrix by, for example,
adding a small enough positive multiple
of the centroid ray of $\cKRI$ to each column of $R$.
We then add a free matrix variable $M$
and the constraints that $(R + \delta R) = R'M$,
$1^T M = 1^T$, 
and $\offdiag(M) \leq 0$, to our optimization problem. 
The constraints imply $-M$ will be Metzler with
$-1$ as its Perron-Frobenius eigenvalue,
and so, all eigenvalues of $M$ will have real part $\geq 1$.
This guarantees that $M$ is an invertible M-matrix ($M^{-1}$ exists and $M^{-1} \geq 0$).
Since $R' = (R + \delta R)M^{-1}$, we have that $\cKR(R') \subseteq \cKR(R+ \delta R)$,
which in-turn guarantees $\cKRI \subset \cKR(R+\delta R)$.

To ensure that $(\delta R, \{\delta \widening_i, \delta \posmatrix_i\})$ are kept small
(so that the linearized equality constraints offer good approximations)
and that the algorithm converges,
we add a penalty and/or a constraint on their norms.
We combine everything to obtain the following LP problem:
\begin{alignat}{2}
   &\!\min_{\delta R,\,M,\,\{\delta\widening_i,\delta\posmatrix_i\}}\,&\>& \bar{\widening} \label{eq:opt_small_delta} \\
   &\text{subject to:}&&
   H_{outer}(R+\delta R) > 0,\, (R+\delta R) = R'M, \nonumber \\
   &&& 1^T M = 1^T,\, \offdiag(M) \geq 0, \nonumber \\
   &&& (\delta R,\,\{\delta\widening_i,\delta\posmatrix_i\}) \text{ small}, \nonumber \\ 
   &\text{and, $\forall\,i \in |\cA|$:}&& \bar{\widening} \geq \widening_i + \delta\widening_i,\, \offdiag(\posmatrix_i + \delta\posmatrix_i) \geq 0, \nonumber \\
   &&& [A_i + \widening_i \outerproduct{\domright_i}{\domleft_i}](\delta R)
   + (\delta \widening_i) \outerproduct{\domright_i}{\domleft_i}R \nonumber \\
   &&& \quad = (\delta R)\posmatrix_i + R(\delta\posmatrix_i) \nonumber 
\end{alignat}

We then update $R$ to $R + \delta R$
and solve the original \eqref{eq:opt} again,
expecting the updated $R$ to be able to solve \eqref{eq:opt}
for a lower $\widening$.
The process is repeated,
stopping when $\widening < 0$ or the algorithm has converged
(which we check by looking at $||\delta R||$).
Algorithm \ref{alg:verification} summarizes the overall process.

\begin{algorithm}[H]
  {\bf Data:} The set of matrices $\cA$, \\
  the maximum number of extreme rays in the cone $\numrays$ \\
  {\bf Result:} $R$ satisfying Proposition \ref{prop:feas} if found, else $False$ \smallskip \\
  {\bf Procedure:} \\
  $\cKRI, \cKHO, R^{(0)} = $ Initialize$(\cA, \numrays)$ \\
  $k = 0$ \\
  {\bf while }{$True$}: \\
  \hspace*{5mm} $w, \{w_i,P_i\} = $ Solution to \eqref{eq:opt} with $R = R^{(k)}$ \\
  \hspace*{5mm} {\bf if }{$w < 0$}: \\
  \hspace*{10mm} return $R^{(k)}$ \\
  \hspace*{5mm} $\delta R = $ Solution to \eqref{eq:opt_small_delta} with $R = R^{(k)}$ \\
  \hspace*{5mm} {\bf if }{$||\delta R|| < \epsilon$}: \\
  \hspace*{10mm} return $False$ \\
  \hspace*{5mm} $R^{(k+1)} = R^{(k)} + \delta R$ \\
  \hspace*{5mm} $k := k + 1$ \\
  \caption{$\cK$-cooperativity Verification Algorithm \label{alg:verification}}
\end{algorithm}

Although this algorithm is not guaranteed to be able to verify $\cK$-cooperativity in general,
if it terminates successfully then the system is guaranteed to be $\cK$-cooperative.
It also appears to be effective in practice,
as we demonstrate in the Examples section.
By modifying existing rays, instead of continuously adding new ones,
this algorithm allows us to
reuse cones found as starting points
for modified problems.
This is crucial for the synthesis problem.

\section{Synthesizing $\cK$-cooperativity} \label{sec:control_design}

\subsection{Problem Formulation}
We next consider the design problem:
how do we find system parameters that make the dynamics $\cK$-cooperative.

We use the following formulation:
\begin{equation}
\dot{x} = f_\parameters(x) = f(x) + \left(\sum_{j = 1}^{\numnonfixed} \parameters_j \designmatrix_j\right)x
\label{eq:synthesis}
\end{equation}
Where $\designmatrix_j$ are fixed matrices, and
$\parameters_j$ are scalars we are free to choose.

Many control design problems can be expressed in this formulation. 
For example, consider a linear system with state feedback $\dot{x} = (A + BF)x$.
Assume it has $n$ states and 1 input.
Then, take $\numnonfixed = n$ and $\designmatrix_j = \outerproduct{B}{e_j}$,
where $e_j$ is a $n$ dimensional vector with a 1 in position $j$ and 0 in all other places.
Each $\parameters_j$ corresponds to $F_j$, the $j^{th}$ element of $F$.

To find parameters that make the dynamics $\cK$-cooperative we need to 
search for a contracting cone while also adapting $\parameters$.
As discussed in Section \ref{subsec:conical_relax},
to be able to test $\cK$-cooperativity with a finite number of computations,
we require a conical relaxation $\cA(\parameters)$,
which now depends on $\parameters$.
For our formulation, $\cA(\parameters)$ has a simple parameterization.
We start with a conical relaxation of $f(x)$, $\cA$.
Then, for each matrix $A_i \in \cA$, we define the parameter dependent matrix $A_i(\parameters) \in \cA(\parameters)$ as:
$$ A_i(\parameters) = A_i + \left(\sum_{j = 1}^{\numnonfixed} \parameters_j \designmatrix_j\right)$$ 
It is easy to see that, since $\cA$ satisfies \eqref{eq:conical_relax} for $\J{x}$,
$\cA(\parameters)$ satisfies \eqref{eq:conical_relax} for $\partial f_\parameters(x) = \partial f(x) + \left(\sum_{j = 1}^{\numnonfixed} \parameters_j \designmatrix_j\right)$. 
We can think of $\cA(\cdot)$ as a function that can be evaluated for a given $\parameters$
to return a valid conical relaxation for $\dot{x} = f_\parameters(x)$.
Propositions \ref{prop:feas} and \ref{prop:opt} still hold for a given $\parameters$ if we set $\cA = \cA(\parameters)$ and $f(x) = f_\parameters(x)$,
providing us with a way to test $\cK$-cooperativity and measure distance to $\cK$-cooperativity,
respectively.
We can also analyze robustness through $\cA(\parameters)$
in the same ways as those described in Section \ref{subsec:conical_relax}.

\begin{remark}
The algorithm that follows can also be used for nonlinear controllers where $f_\parameters(x)$
cannot be decomposed in the same way as \eqref{eq:synthesis}.
To make our exposition cleaner we do not consider this here.
\end{remark}

\subsection{The Algorithm}
\label{subsec:synth_algo}
The high-level structure of the algorithm remains unchanged
from Algorithm \ref{alg:verification}.
However, since the matrices in $\cA(\parameters)$ now depend on $\parameters$, 
we can no longer directly apply the necessary conditions of Section \ref{subsec:ver_necessary}
and do not have obvious candidates for $\primalvector_i$ and $\dualvector_i$.
We instead set all $\primalvector_i$ to a
pre-selected fixed $\primalvector$,
and all $\dualvector_i$ to a
pre-selected fixed $\dualvector$.
This replaces \eqref{eq:geo_assump} with $\cKR(\primalvector) \subset \cK \subset \cKH(\dualvector^T)$,
constraining our search to cones that have 
$\primalvector \in \interior(\cK)$ and $\dualvector \in \interior(\dual{\cK})$.

The cone is initialized in the same way as in Section \ref{subsec:ver_algo} but with $\cKR(\primalvector)$ in place of $\cKRI$ and $\cKH(\dualvector^T)$ in place of $\cKHO$.

We next consider the necessary modifications to
the two optimization steps
corresponding to \eqref{eq:opt} and \eqref{eq:opt_small_delta}.
When incorporating $\parameters$
to our optimization problems we have two choices:
we can either treat $\parameters$ as fixed
when solving the first optimization step
and update $\parameters$ only through solutions to
the second optimization step
(as we do with $R$),
or we can treat $\parameters$ as a variable
directly from the first optimization step
(as we do with $\{\widening_i, \posmatrix_i\}$).
Here we chose to implement the former
because it generalizes to nonlinear formulations
of $\cA(\parameters)$ 
and allows us to bias the values of $\parameters$.
When treating $\parameters$ in this way
we also have to provide initial values $\parameters^{(0)}$ to the algorithm.

Optimization problem \eqref{eq:opt} is replaced by:
\begin{alignat}{2}
   &\!\min_{\{\widening_i,\posmatrix_i\}}\,&\quad& \widening \label{eq:opt_nonfixed_fixed} \\
   &\text{subject to, $\forall\,i \in |\cA(\parameters)|$:}& &\widening \geq \widening_i,\, \offdiag(\posmatrix_i) \geq 0,\nonumber \\
   & & & \left[A_i(\parameters) + \widening_i \outerproduct{\primalvector}{\dualvector}\right]R = R\posmatrix_i \nonumber
\end{alignat}

To obtain an optimization problem analogous to \eqref{eq:opt_small_delta}
we repeat the approximation procedure presented in Section \ref{subsec:ver_algo}, this time applied to \eqref{eq:opt_nonfixed_fixed}.
The only difference is in having to include the
derivative with respect to $\parameters$
when computing the total derivative of the equality constraints.
The following optimization problem is obtained:
\begin{alignat}{2}
   &\!\min_{\delta R,\, \delta \parameters,\,M,\, \{\delta\widening_i,\delta\posmatrix_i\}}\,&\>\,& \bar{\widening} \label{eq:opt_nonfixed_small_delta} \\
   &\text{subject to:}&&
   H_{outer}(R+\delta R) > 0,\, (R+\delta R) = R'M, \nonumber \\
   &&& 1^T M = 1^T,\, \offdiag(M) \geq 0, \nonumber \\
   &&& (\delta R,\,\{\delta\widening_i,\delta\posmatrix_i\}) \text{ small}, \nonumber \\  
   &\text{and, $\forall\,i \in |\cA(\parameters)|$:}&& \bar{\widening} \geq \widening_i + \delta\widening_i,\, \offdiag(\posmatrix_i + \delta\posmatrix_i) \geq 0, \nonumber \\
   &&& [A_i(\parameters) + \widening_i \outerproduct{\primalvector}{\dualvector}](\delta R)
   + (\delta \widening_i) \outerproduct{\primalvector}{\dualvector}R 
   \nonumber \\
   &&& \quad + \left(\sum_{j = 1}^{\numnonfixed}(\delta \parameters_j) \designmatrix_j \right) R \nonumber \\
   &&& \quad = (\delta R)\posmatrix_i + R(\delta\posmatrix_i) \nonumber 
\end{alignat}

The overall process is summarized in Algorithm \ref{alg:synthesis}:
\begin{algorithm}[H]
   {\bf Data:} The set of matrix functions $\cA(\cdot)$, \\
   the initial set of parameters $\parameters^{(0)}$, \\
   a vector that will lie inside the cone $\primalvector$, \\
   a vector that will lie inside the dual cone $\dualvector$, \\
   the number of rays in the cone $\numrays$ \\
   {\bf Result:} $R$ and $\parameters$ such that $R$, $\cA(\parameters)$, and $f_\parameters(x)$ satisfy Proposition \ref{prop:feas} if found, else $False$ \smallskip \\
   {\bf Procedure:} \\
   $R^{(0)} = $ Initialize$(\primalvector, \dualvector^T, \numrays)$ \\
   $k = 0$ \\
   {\bf while }{$True$}: \\
   \hspace*{5mm} $w, \{w_i,P_i\} = $ Solution to \eqref{eq:opt_nonfixed_fixed} with $R = R^{(k)},\, \parameters = \parameters^{(k)}$ \\
   \hspace*{5mm} {\bf if }{$w < 0$}: \\
   \hspace*{10mm} return $R^{(k)},\,\parameters^{(k)}$ \\
   \hspace*{5mm} $\delta R, \delta \parameters = $ Solution to \eqref{eq:opt_nonfixed_small_delta} with $R = R^{(k)},\, \parameters = \parameters^{(k)}$ \\
   \hspace*{5mm} {\bf if }{$||\delta R|| < \epsilon_R$ and
   $||\delta \parameters|| < \epsilon_{\parameters}$}: \\
   \hspace*{10mm} return $False$ \\
   \hspace*{5mm} $R^{(k+1)} = R^{(k)} + \delta R$ \\
   \hspace*{5mm} $\parameters^{(k+1)} = \parameters^{(k)} + \delta \parameters$ \\
   \hspace*{5mm} $k := k + 1$ \\
   \caption{$\cK$-cooperativity Synthesis Algorithm \label{alg:synthesis}}
 \end{algorithm}

\section{Examples} \label{sec:examples}

\subsection{Electrical Switch} \label{example:electrical_switch}
Taking inspiration from the bistable and oscillatory circuits analyzed in 
\cite{miranda-villatoro_differentially_2018,miranda-villatoro_dissipativity_2019},
we consider the electrical circuit with three states shown in \figref{fig:electrical_circuit} (left).
The dynamics of this circuit are given by:
 \begin{equation}
 \begin{split}
   L_1 \dot{x}_1 &= -R_1 x_1 + x_2 - x_3 \\
   C_2 \dot{x}_2 &= -x_1 - x_2/R_2 \\
   C_3 \dot{x}_3 &= x_1 - f(x_3)
 \end{split}
 \label{eq:electrical_switch}
\end{equation}

Here $x_1$ represents the current across the inductor, and
$x_2$ and $x_3$ represent the voltages across the capacitors.
The scalar function $f(\cdot)$ represents a nonlinear resistor.
We take its derivative, $f'$, to be bounded by $f'_l \leq f' \leq f'_u$.

For different choices of parameters, this system exhibits a range of different qualitative behaviors. 
We fix \electricalvaluescommon{} and let \electricalnonlineardefault{},
shown in \figref{fig:electrical_circuit} (right),
such that \electricalnonlinearbounds{}.
Then, for $R_1 = \electricalresistanceosc$, simulations show the existence of a limit cycle,
while for $R_1 = \electricalresistancebi$ the system appears to be bistable.
This is illustrated in \figref{fig:electrical_circuit_analysis} (left).

\begin{figure}[tbp]
   \begin{minipage}{.49\linewidth}
      \begin{center}
      \begin{circuitikz}[scale=.5,transform shape]
         \tikzstyle{every node}=[font=\Large]
         \draw (0,0)
         to[short] (0,2)
         to[R=$R_2$] (0,4) 
         to[short] (0,5)
         to[short] (2,5)
         to[short] (2,4)
         to[C=$C_2$, v=$x_2$] (2,2) 
         to[short] (2,0)
         to[short] (0,0);
         \draw (2,5)
         to[L=$L_1$, i=$x_1$] (6,5)
         to[R=$R_1$] (6,3)
         to[short] (4,3)
         to[short] (4,2)
         to[C=$C_3$, v=$x_3$] (4,0)
         to[short] (2,0);
         \draw (6,3)
         to[ageneric, i=$f(x_3)$] (6,0)
         to[short] (4,0);
         \draw (6,3)
         to[R, bipoles/length=.6cm] (6,0);
         \draw (2,0)
         node[ground]{};
      \end{circuitikz}
      \end{center}
   \end{minipage}
   \begin{minipage}{.49\linewidth}
      \includegraphics[width=\linewidth]{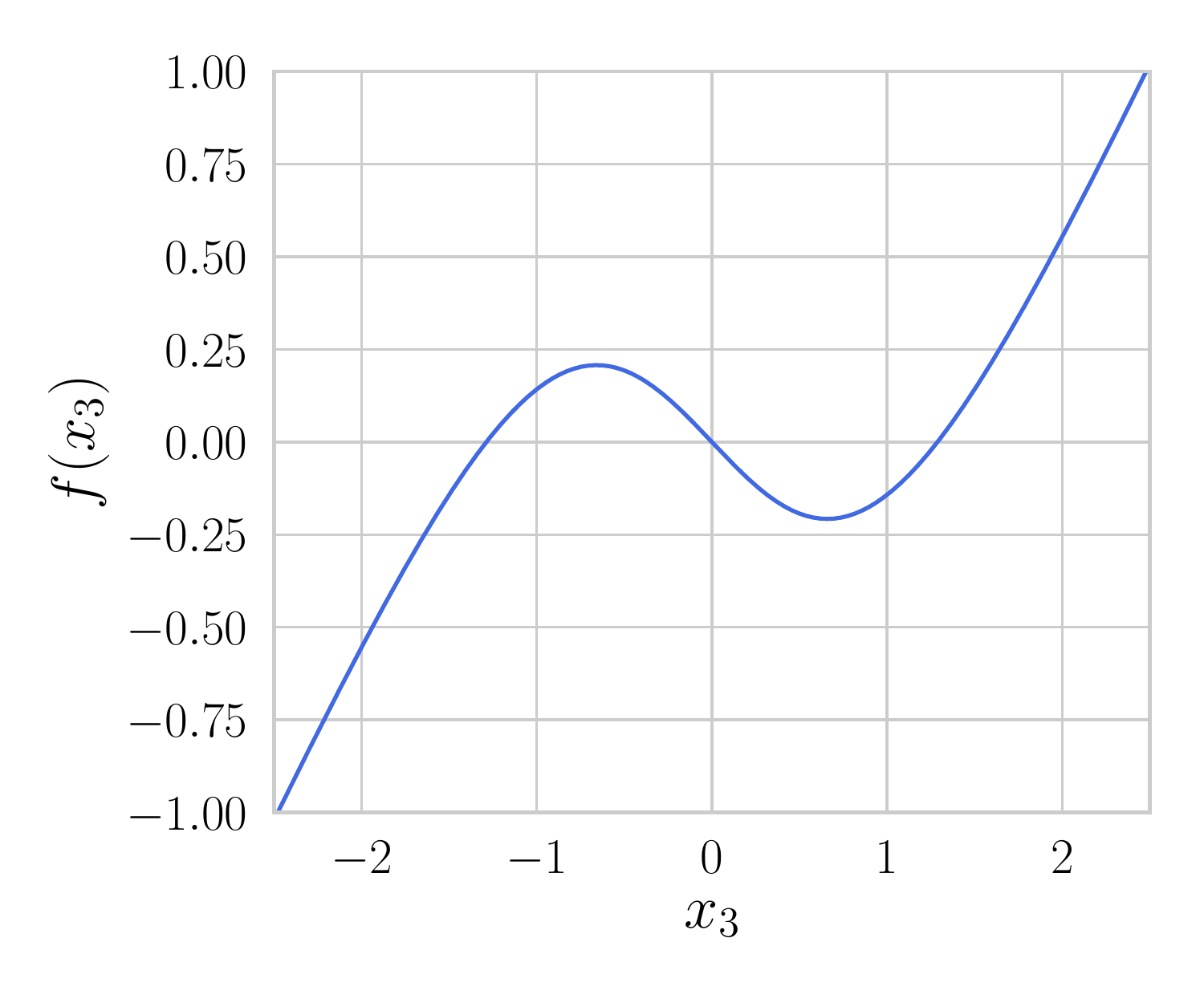}
   \end{minipage}
   \caption{\textbf{Left:} Circuit used in Section \ref{example:electrical_switch}. \\
   \textbf{Right:} Plot of static nonlinearity $f(x_3)$.
   }
   \label{fig:electrical_circuit}
 \end{figure}

Algorithm \ref{alg:verification} finds a contracting cone for the case where $R_1 = \electricalresistancebi$, proving that it is $\cK$-cooperative.
Since the chosen nonlinear resistor guarantees bounded
trajectories and our system has two stable equilibrium points,
this in turn proves that it is bistable.
To further investigate this scenario,
we incorporate robustness considerations:
(i) We add matrix $[0, 0, 1]^T[0, 0, 1]$ to our conical relaxation.
This removes the lower bound on $f'$ ($f(\cdot)$ must still be Lipschitz).
(ii) We allow all passive components (the resistances, capacitors, and inductor)
other than $R_1$
to vary by $\pm \electricaltolerance$, generating matrices for each possible scenario.
The robust cone found is shown in \figref{fig:electrical_circuit_analysis} (right).

We now attempt to use Algorithm \ref{alg:synthesis} to find an $R_1$ for which we can prove that the system is $\cK$-cooperative even if
the passive components are allowed to vary by $\pm \electricalnewtolerance{}$.
We initialize the nominal values at those of the oscillatory scenario from \figref{fig:electrical_circuit}.
We still include matrix $[0, 0, 1]^T[0, 0, 1]$ in our conical relaxation, removing the lower bound on $f'$.
As such we set $\primalvector = \dualvector = [0, 0, 1]^T$, $\parameters = R_1$, $\parameters^{(0)} = \electricalresistanceosc$.
We find the cone shown in \figref{fig:electrical_circuit_synthesis} (left),
which is contracted for our system
provided $R_1 = \electricalnewresistancebi{}$ and all passive components (other than $R_1$) are within $\pm \electricalnewtolerance{}$ of their nominal values.
We can therefore conclude that, for the above range of component values,
the system will be bistable as long as trajectories are bounded
and the equilibrium at $0$ is unstable.
Some possible example trajectories are shown in \figref{fig:electrical_circuit_synthesis} (right).

\begin{figure}[tbp]
   \begin{minipage}{.49\linewidth}
      \includegraphics[width=\linewidth]{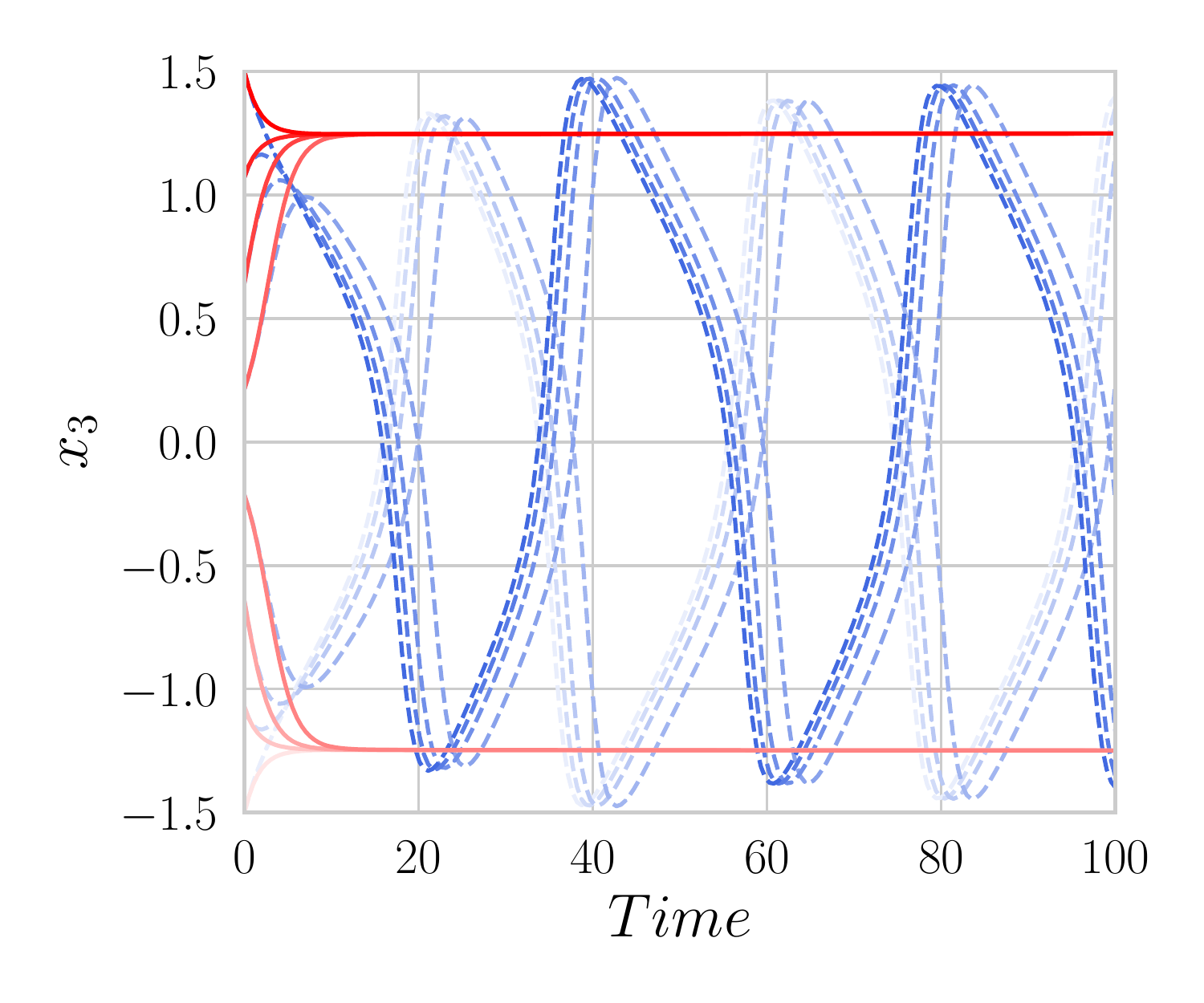}
   \end{minipage}
   \begin{minipage}{.49\linewidth}
      \includegraphics[width=\linewidth]{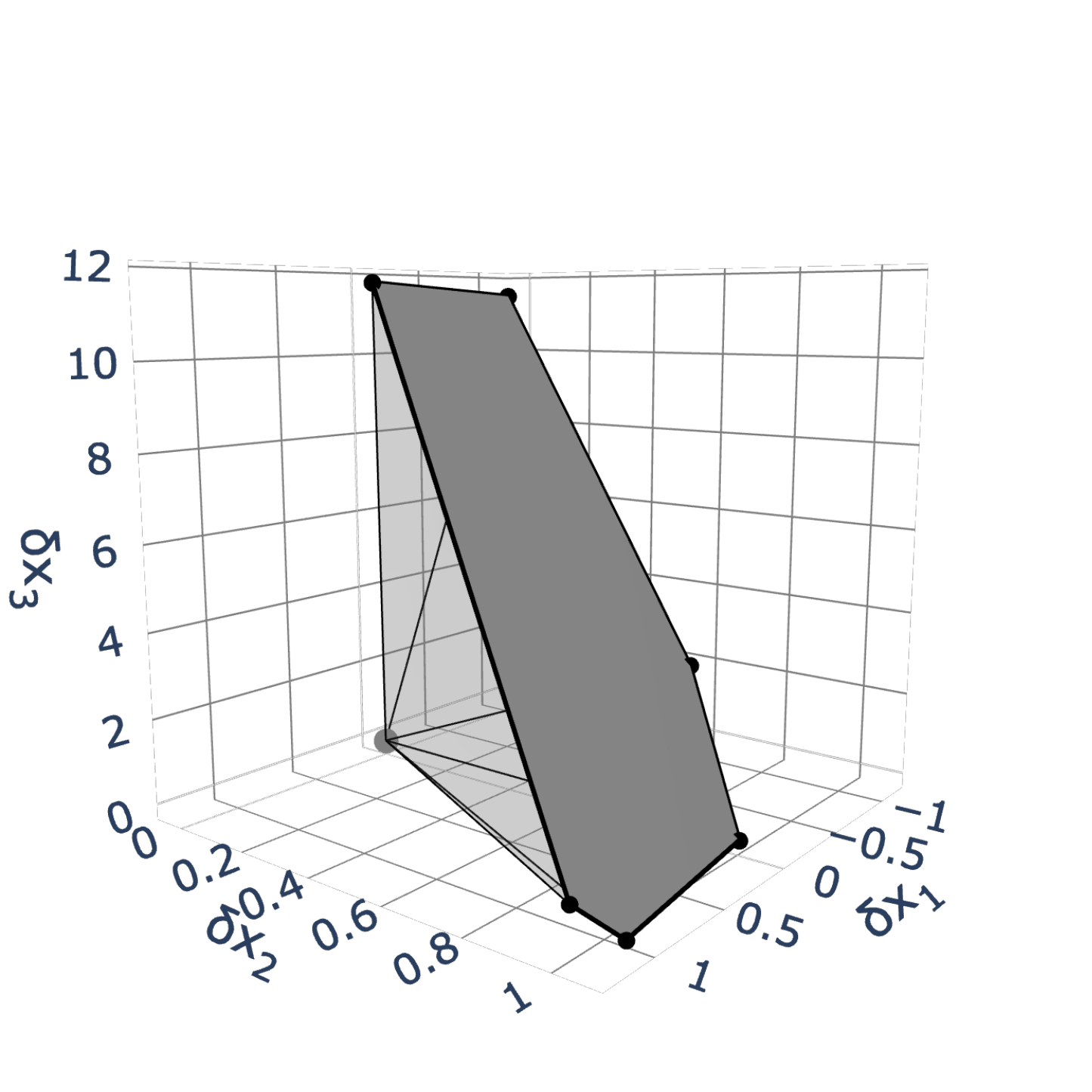}
   \end{minipage}
   \caption{
   \textbf{Left:} Example trajectories of \eqref{eq:electrical_switch} with $R_1 = \electricalresistanceosc$ (blue and dashed) and $R_1 = \electricalresistancebi$ (red). \\
   \textbf{Right:} Cone found for \eqref{eq:electrical_switch} using Algorithm \ref{alg:verification} ($R_1 = \electricalresistancebi$, component tolerance $\pm \electricaltolerance{}$).
   }
   \label{fig:electrical_circuit_analysis}
 \end{figure}

\begin{figure}[tbp]
   \begin{minipage}{.49\linewidth}
      \includegraphics[width=\linewidth]{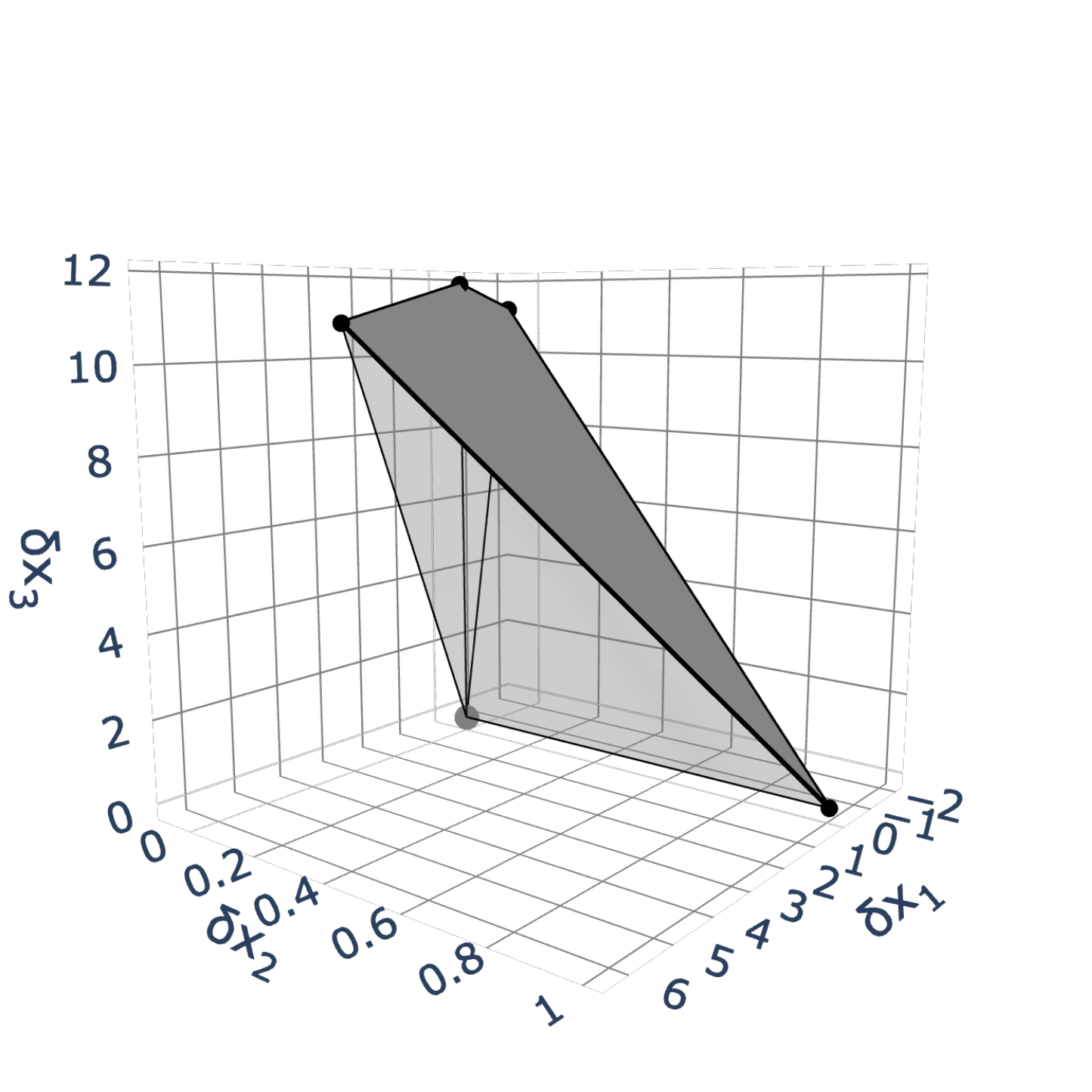}
   \end{minipage}
   \begin{minipage}{.49\linewidth}
      \includegraphics[width=\linewidth]{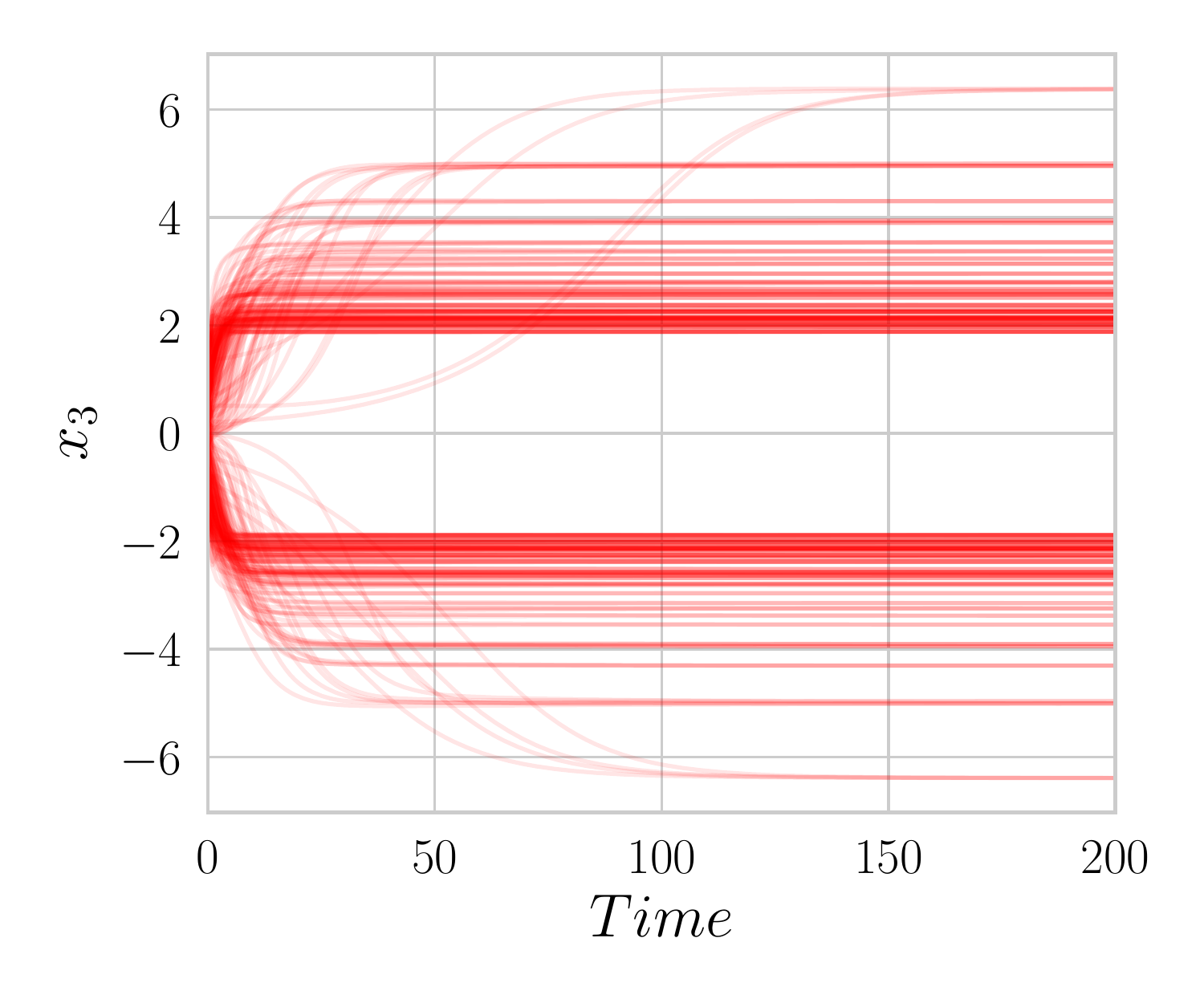}
   \end{minipage}
   \caption{
   \textbf{Left:} Cone found for \eqref{eq:electrical_switch} using Algorithm \ref{alg:synthesis} ($R_1^{(0)} = \electricalresistanceosc$, final $R_1 = \electricalnewresistancebi{}$, component tolerance $\pm \electricalnewtolerance{}$). \\
   \textbf{Right:} Example trajectories of \eqref{eq:electrical_switch}
   with different component values
   for which the cone on the left guarantees bistability.
   }
   \label{fig:electrical_circuit_synthesis}
 \end{figure}

\subsection{First Order Consensus} \label{example:consensus}
We consider dynamics of the form:
\begin{equation}
   \dot{x_i} = \sum_{j=1}^N f_{ij}(x_j-x_i) \qquad 0 < i \leq N,
   \label{eq:cons_nonlinear} 
\end{equation}
where each agent $x_i$ represents a simple integrator 
driven by the weighted differences with its neighboring agents,
characterized by functions
$f_{ij}: \mathbb{R} \to \mathbb{R}$ such that $f_{ij}(0) = 0$.
We say that the agents reach consensus when $x_1 = \cdots = x_N$.

We can use $\cK$-cooperativity to study consensus of nonlinear or uncertain systems:
the trajectories of a strictly $\cK$-cooperative system with $1_N \in \interior(\cK)$
converge to consensus.

Here, we revisit the consensus example from Section V.A of \cite{kousoulidis_finding_2019},
where we considered a network of 5 agents with the topology shown in \figref{fig:cons_graph}.
The black edges represent linear connections with weights normalized to 1,
the blue edges represent nonlinear connections with slopes restricted to $-1 \leq f_{15}' \leq 1$ and $-1 \leq f_{42}' \leq 1$, and the red edge represents a linear connection with weight set to $k$.

We initially fix $k = 1$ and attempt to use Algorithm \ref{alg:verification}.
We successfully find a contracting cone with \consensusanalnumraysnew{} rays.
For comparison,
when we use the older Algorithm from \cite{kousoulidis_finding_2019} on this problem,
we obtain a cone composed of \consensusanalnumraysold{} rays.

Next, we attempt to find a $k$ such that the system reaches consensus for any
$-\consensussynampl \leq f_{15}' \leq \consensussynampl$ and $-\consensussynampl \leq f_{42}' \leq \consensussynampl$.
For this we use Algorithm \ref{alg:synthesis} with $\primalvector, \dualvector = 1_N$, $c = k$ and $c^{(0)} = 1$.
We find a contracting cone with $k=\consensussyngain$.
An example of a random trajectory when $f_{15}(x), f_{42}(x) = -\consensussynampl \sin(x)$
with $k = 1$ and $k = \consensussyngain$ is shown in \figref{fig:cons_traj}. 
Since $-\consensussynampl \leq f_{15}' \leq \consensussynampl$ and $-\consensussynampl \leq f_{42}' \leq \consensussynampl$,
we can only guarantee consensus for the case where $k = \consensussyngain$,
and plot the trajectory when $k = 1$ for comparison.

\begin{figure}[tbp]
	\centering
	\begin{minipage}{.49\linewidth}
		\centering
		\begin{tikzpicture}[x=0.7cm,y=0.9cm]
		\SetUpEdge[lw         = 1.5pt,
		color      = black]
		\GraphInit[vstyle=Normal] 
		\SetGraphUnit{1.4}
		\tikzset{every node/.style={fill=yellow}}
		\tikzset{VertexStyle/.append  style={fill}}
		\Vertex{1}
		\NOEA(1){2}
		\SOEA(1){3}
		\EA(2){4}
		\EA(3){5}
		\tikzset{EdgeStyle/.style={->}}
		\Edge(2)(1)
		\Edge(4)(2)
		\Edge(3)(2)
		\Edge(1)(3)
      \Edge(1)(5)
      \tikzset{EdgeStyle/.style={->, color=red}}
      \Edge[label=$k$, labelcolor=none, labelstyle={right}](5)(4)
		\tikzset{EdgeStyle/.style={->, color=blue, bend right}}
		\Edge[label=$f_{15}$, labelcolor=none, labelstyle={above, pos=.4}](5)(1)
		\tikzset{EdgeStyle/.style={->, color=blue, bend right}}
		\Edge[label=$f_{42}$, labelcolor=none, labelstyle={below}](2)(4)
		\end{tikzpicture}
	\end{minipage}
\begin{minipage}{.49\linewidth}
	\includegraphics[width=\linewidth]{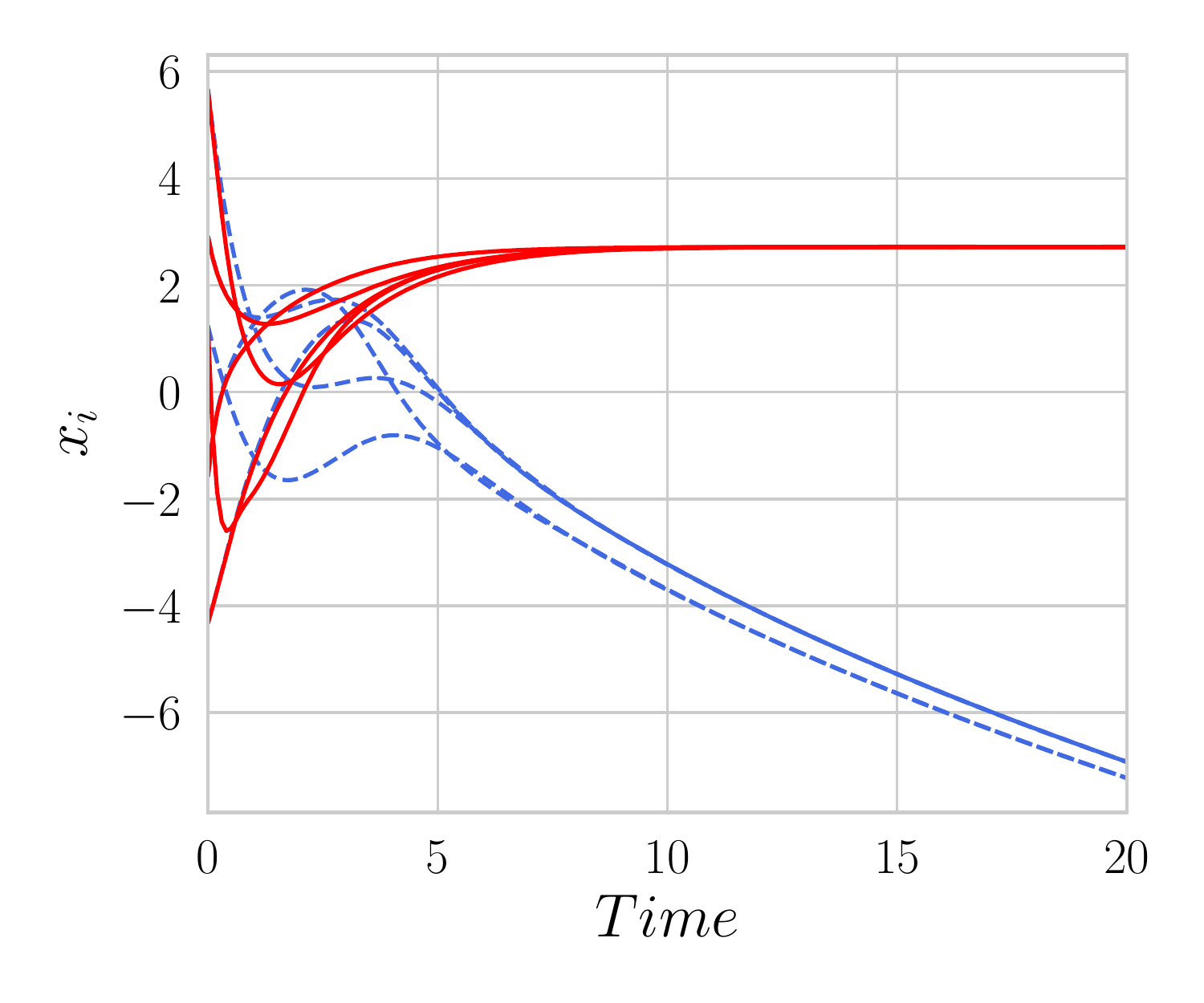}
	\end{minipage}
	\caption{\textbf{Left:} Consensus topology.
   \textbf{Right:} Comparison of a random trajectory of the consensus system with
   $f_{15}(x), f_{42}(x) = -\consensussynampl \sin(x)$
   between when $k = 1$ (blue) and when $k = \consensussyngain$ (red).
	}
	\label{fig:cons_graph}
	\label{fig:cons_traj}
\end{figure}

\subsection{Spring-Damper: Lyapunov Analysis} \label{example:spring_damper}
As a final example, we show some initial results on how our algorithm can also
be adapted to find Lyapunov functions and design robust stabilizing controllers for linear time-varying systems described by 
polytopic Linear Differential Inclusions (LDIs), $\dot{x} = A(t) x$ with $A(t) \in \convexhull(\cA)$;
where we define $\convexhull(\cA)$ as:
$$\convexhull(\cA) = \left\{ A : A = \sum_{i=1}^k \posvector_i A_i , \,\posvector_i \geq 0, \sum_{i=1}^k \posvector_i = 1 \right\}$$

We can do this by finding a bounded polytope $\cC$ that is contracted by $\cA$. That is a $\cC$ such that for every $A_i \in \cA$, if $\dot{x} = A_i x$ then
\begin{equation}
  x(0) \in \cC \implies x(t) \in \interior(\cC) \text{ for all } t \geq 0.
  \label{eq:contract_polytope}
\end{equation}
The Minkowski functional of $\cC$ is then a Lyapunov function for the LDI
(\cite{blanchini_set_1999}).

To use our algorithm to find bounded polytopes instead of polyhedral cones,
we begin by transforming our $n$ dimensional matrices $A_i \in \cA$ into extended $n+1$ dimensional matrices $\hat{A}_i \in \hat{\cA}$:
\begin{equation}
\hat{A}_i =
\begin{bmatrix}
   0 & 0_n^T \\
   0_n & A_i
\end{bmatrix}
\end{equation}
And we set all $\primalvector_i,\,\dualvector_i = [1, 0_n]^T$.

We then apply Algorithm \ref{alg:verification} or \ref{alg:synthesis}
as normal, obtaining a $n+1$ dimensional $\cKR(R)$ that satisfies 
\eqref{eq:strict-subtang} for each $\hat{A}_i \in \hat{\cA}$ and,
in the case of Algorithm \ref{alg:synthesis},
a corresponding vector of parameters $\parameters$.

We map a $\cKR(R)$ that satisfies \eqref{eq:strict-subtang} to a $\cC$ that satisfies \eqref{eq:contract_polytope}
by taking a slice of the cone about 
the hyperplane $\innerproduct{[1,0_n]}{x} = 1$.
Numerically, if each column of $R$ is scaled such that the first element is 1,
this is equivalent to removing the first row of $R$,
with each column of the matrix obtained representing
a vertex of $\cC$.


We apply this to the following time varying spring-damper system:
\begin{equation}
\begin{bmatrix}
   \dot{p} \\
   \dot{v}
\end{bmatrix}
=
\begin{bmatrix}
   0 & 1 \\
   -(\phi(t) + k_p) & -2
\end{bmatrix}
\begin{bmatrix}
   p \\
   v
\end{bmatrix}
\label{eq:spring_damper}
\end{equation}
Where $\phi(t)$ represents an uncertain (and potentially negative) spring constant and $k_p$ represents a proportional position feedback gain.

We initially consider the analysis problem by setting 
$\springdamperanalmin \leq \phi(t) \leq \springdamperanalmax$
and $k_p = 0$.
We use Algorithm \ref{alg:verification} to obtain the contracting polytope shown in \figref{fig:spring_damper_analysis} (left).
We then attempt the synthesis problem, setting $\springdampersynmin \leq \phi(t) \leq \springdampersynmax$ and letting $c = k_p$ be a free parameter
initialized to 0.
Using Algorithm \ref{alg:synthesis},
we simultaneously obtain a stabilizing gain ($k_p = \springdampersyninitialkp$) and a contracting polytope,
as shown in \figref{fig:spring_damper_synthesis} (right).
It is worth noting that attempting to find a Quadratic Lyapunov function for the LDI
using linear matrix inequalities fails when
$\springdampersynmin \leq \phi(t) \leq \springdampersynmax$
and $k_p = \springdampersyninitialkp$.

\begin{figure}[tbp]
   \begin{minipage}{.49\linewidth}
      \includegraphics[width=\linewidth]{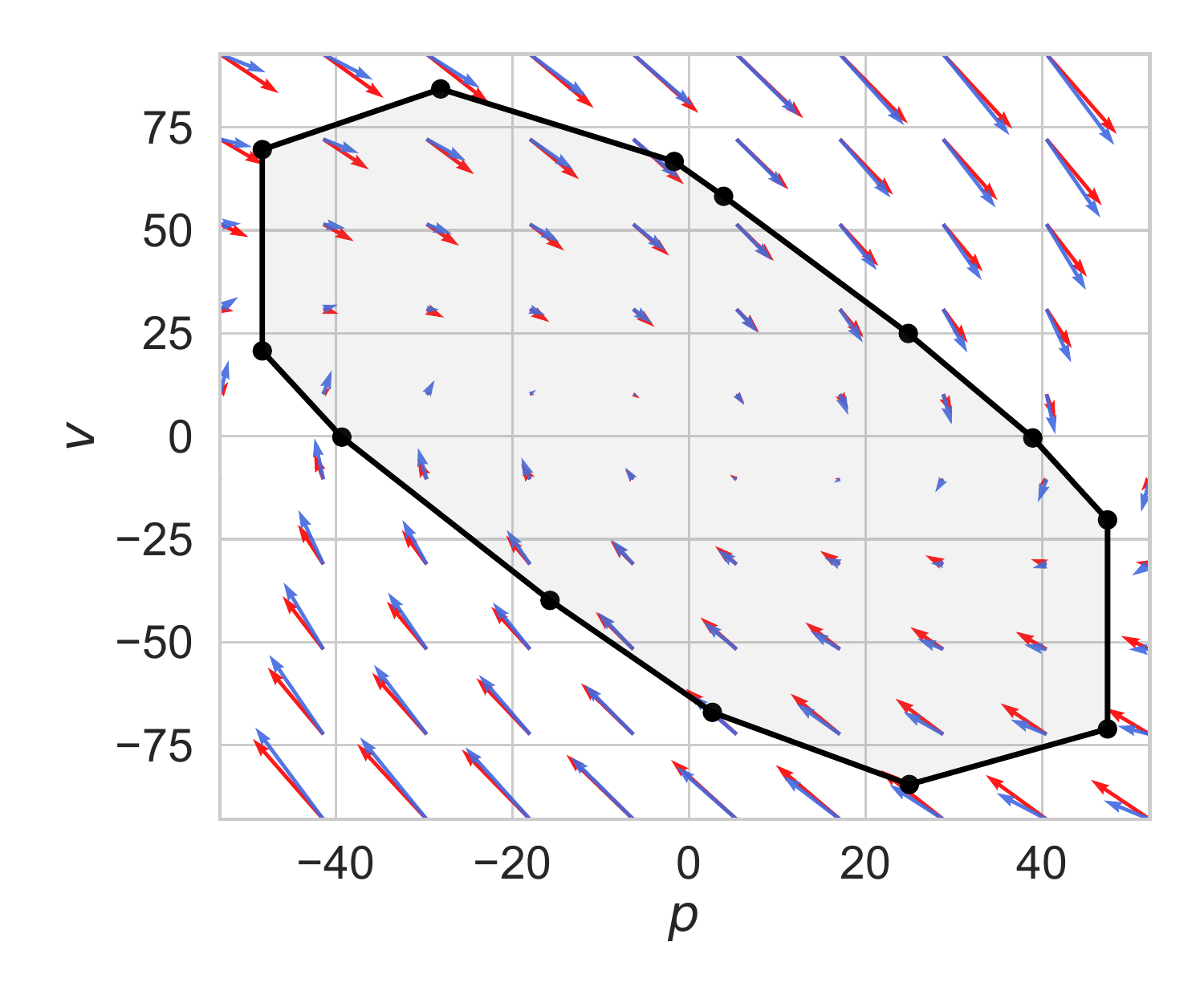}
   \end{minipage}
   \begin{minipage}{.49\linewidth}
      \includegraphics[width=\linewidth]{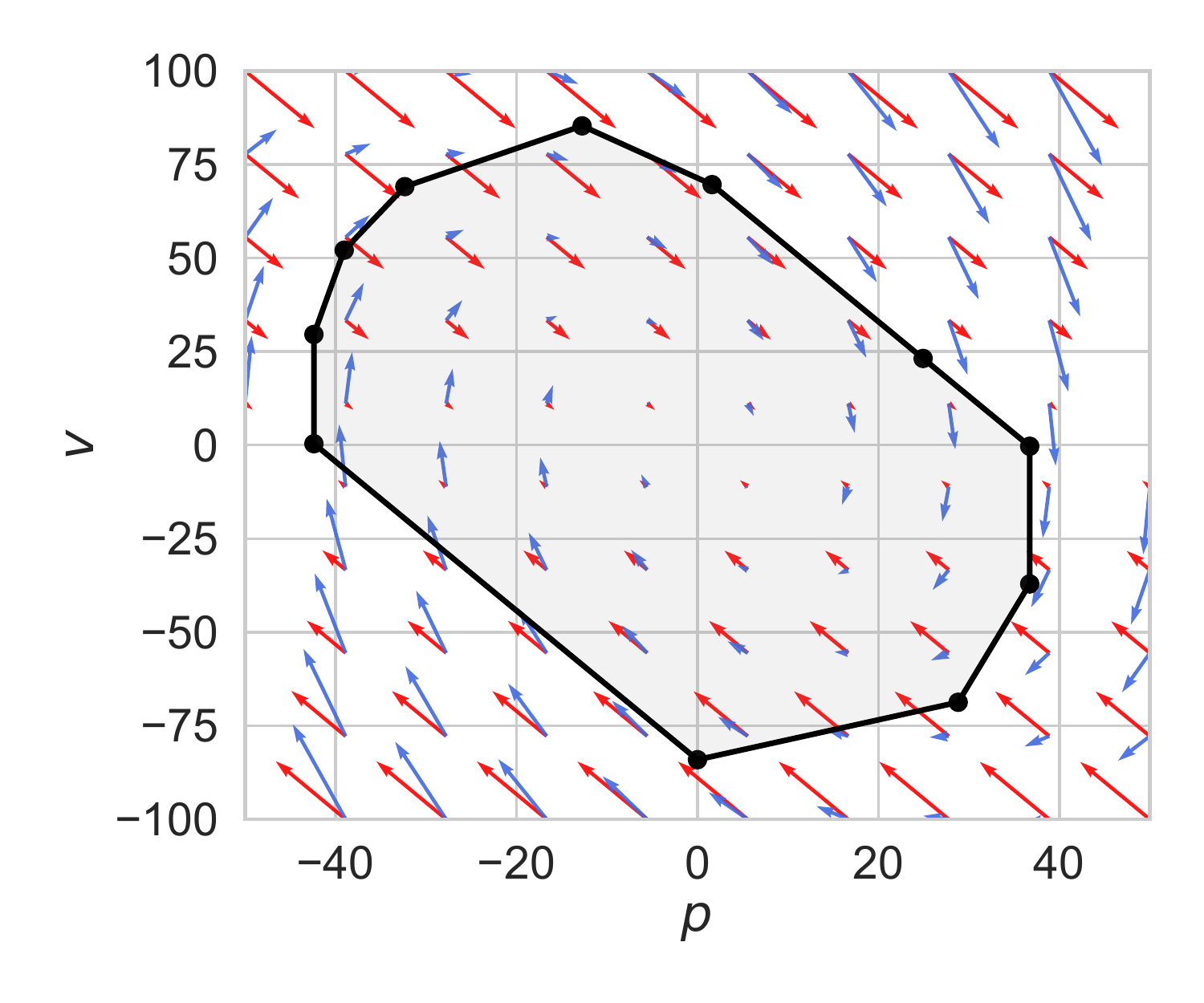}
   \end{minipage}
   \caption{
      Contracting Polytopes found for \eqref{eq:spring_damper}
      and the vector fields 
      generated by each of the two $A_i \in \cA$.\\
      \textbf{Left:} Analysis problem with $\springdamperanalmin \leq \phi(t) \leq \springdamperanalmax$
      and $k_p = 0$.\\
      \textbf{Right:} Synthesis problem with
      $\springdampersynmin \leq \phi(t) \leq \springdampersynmax$
      and $k_p^{(0)} = 0$.
      Final $k_p = \springdampersyninitialkp$.
   }
   \label{fig:spring_damper_analysis}
   \label{fig:spring_damper_synthesis}
\end{figure}

In this example we have shown a new method for computing polyhedral Lyapunov functions
(\cite{blanchini_set-theoretic_2015}).
We leave a thorough investigation of this application area,
including its differential version
(\cite{forni_differential_2014}),
as future work.

\section{Conclusions}
We presented a novel algorithm for constructing contracting polyhedral cones.
The algorithm is based on Linear Programming and keeps the number of vectors in the representation of the cone fixed.
This enabled us to verify and 
synthesize $\cK$-cooperative systems without fixing a cone a priori.
Building on differential positivity, $\cK$-cooperative systems
capture more behaviors than traditional differential analysis
approaches, which we illustrated
by providing examples of robust analysis and synthesis for
bistable and multi-agent systems.

\begin{ack}
The authors wish to thank I. Cirillo and F. Miranda for
useful comments and suggestions to the manuscript.
\end{ack}

\bibliography{20_IFAC}

\end{document}